\documentclass[11pt]{article}

\usepackage[T1]{fontenc}
\usepackage[english]{babel}
\usepackage{authblk}

\usepackage{microtype}
\usepackage{amsmath,amssymb,amsfonts,amsthm,mathtools}
\usepackage{geometry}
\usepackage{hyperref}
\hypersetup{hidelinks}
\usepackage[shortlabels]{enumitem}
\usepackage[numbers,sort&compress]{natbib}

\usepackage{color}

\newtheorem{theorem}{Theorem}[section]
\newtheorem{lemma}[theorem]{Lemma}
\newtheorem{proposition}[theorem]{Proposition}
\newtheorem{claim}[theorem]{Claim}
\newtheorem{corollary}[theorem]{Corollary}
\newtheorem{conjecture}[theorem]{Conjecture}
\newtheorem{remark}[theorem]{Remark}

\newcommand{\eps}{\varepsilon}
\newcommand{\VC}{\operatorname{VCdim}}
\newcommand{\eVC}{\operatorname{eVCdim}}

\newcommand{\ceil}[1]{\lceil #1\rceil}

\title{A Single-Exponential Erd\H{o}s--Hajnal Bound for Graphs of Bounded VC-Dimension}

\author[1]{\textbf{Shuang Sun}\thanks{Email: \href{mailto:chocolatesun@sjtu.edu.cn}{chocolatesun@sjtu.edu.cn}.}}
\author[2]{\textbf{Yan Wang}\thanks{Email: \href{mailto:yan.w@sjtu.edu.cn}{yan.w@sjtu.edu.cn}.}}
\author[3]{\textbf{Jiasheng Zeng}\thanks{Email: \href{mailto:jasonzeng@mail.ustc.edu.cn}{jasonzeng@mail.ustc.edu.cn}.}}

\affil[1]{\footnotesize School of Mathematical Sciences, Shanghai Jiao Tong University, Shanghai 200240, China}
\affil[2]{\footnotesize School of Mathematical Sciences, CMA-Shanghai, Shanghai Jiao Tong University, Shanghai 200240, China}
\affil[3]{\footnotesize Department of Mathematics, Hong Kong University of Science and Technology, Clear Water Bay, Hong Kong}

\date{}
\begin{document}
\maketitle
\begin{abstract}

A homogeneous set in a graph is a clique or a stable set. 
The Erd\H{o}s--Hajnal conjecture states that, for every graph $H$, there
exists $c>0$ such that every $H$-free graph on $n$ vertices has a homogeneous set of size at least $n^c$.
Nguyen, Scott and
Seymour proved that for every $d>0$, graphs of VC-dimension at most $d$
have the Erd\H{o}s--Hajnal property, confirming a conjecture of Fox, Pach and Suk. In particular, they showed that every such $n$-vertex graph contains a homogeneous set of size at least $n^{\eta_d}$ for some $\eta_d\ge 2^{-2^{O(d)}}$.
In this paper, we give a sharper quantitative bound on the homogeneous sets in graphs of VC-dimension at most $d$, showing that one may take
$
        \eta_d\ge (Cd)^{-d},
$
where $C$ is an absolute constant.  Equivalently, every graph $G$ of VC-dimension at most $d$ satisfies
\[
        \max\{\omega(G),\alpha(G)\}\ge |G|^{(Cd)^{-d}}.
\]
Our proof refines the iterative sparsification method of Nguyen, Scott and Seymour. 
The main enhancement is to apply the VC-dimension assumption directly, which gives
a more efficient induction and thus improves the dependence on $d$.
We also derive quantitative consequences for polynomial R\"odl subgraphs, hypergraph Ramsey bounds under bounded VC-dimension, induced-free and viral formulations, tournaments, NIP and semi-algebraic graphs, Boolean combinations of relations of bounded VC-dimension, graphs whose adjacency matrices have bounded rank, graphs of bounded sign-rank, and graphs defined by dot-product threshold representations.
\end{abstract}

\section{Introduction}
All (hyper)graphs in this paper are finite and simple. A graph $H$ is an
induced subgraph of a graph $G$ if $H$ can be obtained from $G$ by removing
vertices. A class $\mathcal C$ of graphs is hereditary if it is closed under
taking induced subgraphs and under isomorphism; and a hereditary class
$\mathcal C$ is proper if it is not the class of all graphs.
The Erd\H{o}s--Hajnal conjecture is a far-reaching strengthening of Ramsey's theorem for hereditary graph classes. We say that a class $\mathcal C$ of graphs has the \emph{Erd\H{o}s--Hajnal property} if there exists $c=c(\mathcal C)>0$ such that every $n$-vertex graph $G\in\mathcal C$ contains a clique or a stable set of size at least $n^c$. Equivalently, for every fixed graph $H$, the conjecture asserts that the hereditary class of graphs with no induced copy of $H$ has this property~\cite{ErdosHajnal1977,ErdosHajnal1989}.

\begin{conjecture}
    Every proper hereditary class of graphs has the Erd\H{o}s--Hajnal property.
\end{conjecture}

This conjecture has become one of the central problems in induced Ramsey theory; see a survey of Chudnovsky~\cite{Chudnovsky2014}. A basic structural principle was established by Alon, Pach and Solymosi, who proved that the Erd\H{o}s--Hajnal property is preserved under substitution operation, which reduces the general problem to prime graphs~\cite{AlonPachSolymosi2001}. Several difficult prime or near-prime cases have since been settled. Chudnovsky and Safra proved the conjecture for bull-free graphs \cite{ChudnovskySafra2008}. Chudnovsky, Fox, Scott, Seymour and Spirkl obtained a strengthened lower bound for graphs with no induced $5$-cycle~\cite{ChudnovskyFoxScottSeymourSpirkl2019}, and Chudnovsky, Scott, Seymour and Spirkl later proved the full Erd\H{o}s--Hajnal conjecture for this class~\cite{ChudnovskyScottSeymourSpirkl2023}. Nguyen, Scott and Seymour proved the conjecture for $P_5$-free graphs \cite{NguyenScottSeymour2026VII}; together with the $C_5$-free case, this completes the conjecture for all graphs on at most five vertices. 
Very recently, Huang, Ju and Zhou proved the Erd\H{o}s--Hajnal property for two six-vertex graphs, the $E$-graph obtained from $P_5$ by adding a pendant edge to the middle vertex and the Bird graph obtained from the bull by adding a pendant edge to one horn~\cite{HuangJuZhou2026}.

Other progress comes from geometric and low-complexity intersection patterns. A graph class is said to have the \textit{strong Erd\H{o}s--Hajnal property} if there exists $\delta>0$ such that every $n$-vertex graph in the class contains two disjoint vertex sets $A$ and $B$, each of size at least $\delta n$, that are complete or anticomplete to each other in $G$.  Pach and Solymosi proved the strong Erd\H{o}s--Hajnal property for intersection graphs of segments \cite{PachSolymosi2001}, Alon, Pach, Pinchasi, Radoi\v{c}i\'c and Sharir proved an Erd\H{o}s--Hajnal type result for semi-algebraic intersection graphs~\cite{AlonPachPinchasiRadoicicSharir2005}, and Fox, Pach and T\'oth obtained related results for intersection patterns of curves \cite{FoxPachToth2011}.
Tomon then proved that every string graph has the Erd\H{o}s--Hajnal property~\cite{Tomon2024}. Recent work has also produced new general mechanisms for induced Ramsey problems. Fox, Nguyen, Scott and Seymour proved sparse-versus-dense density statements for cographs~\cite{FoxNguyenScottSeymour2025}; Nguyen, Scott and Seymour found infinitely many new prime graphs with the Erd\H{o}s--Hajnal property~\cite{NguyenScottSeymour2023IV}, and proved that all paths satisfy near-polynomial Erd\H{o}s--Hajnal bounds \cite{NguyenScottSeymour2023V}. In a different direction, Buci\'c, Nguyen, Scott and Seymour improved the general lower bound for every fixed forbidden induced subgraph $H$~\cite{BucicNguyenScottSeymour2024}, while Buci\'c, Fox and Pham proved that the Erd\H{o}s--Hajnal conjecture is equivalent to polynomial versions of the R\"odl and Nikiforov conjectures \cite{BucicFoxPham2024}.

A particularly rich source of graphs satisfying the Erd\H{o}s--Hajnal property arises from low-complexity set-system conditions, most notably bounded Vapnik--Chervonenkis dimension (VC-dimension for short).
For a family $\mathcal{F}$ of subsets of a set $X$, a set $S\subseteq X$ is \emph{shattered} by $\mathcal{F}$ if for every $A\subseteq S$ there exists $F\in\mathcal{F}$ with $F\cap S=A$. The \emph{VC-dimension} of $\mathcal{F}$ is the largest size of a shattered set~\cite{VapnikChervonenkis1971}. 
If $S\subseteq V(G)$ and $v\in V(G)$, we call $N_G(v)\cap S$ the
\emph{trace} of $v$ on $S$.
For a graph $G$, its VC-dimension $\operatorname{VCdim}(G)$ is the VC-dimension of the neighbourhood family $\{N_G(v):v\in V(G)\}$, where $N_G(v)$ denotes the set of all neighbours of $v$ in $G$, not including
$v$ itself.
This definition was linked to model theory by Laskowski's characterization of NIP formulas via finite VC-dimension~\cite{Laskowski1992}.
When a graph is viewed through the set system of its vertex neighbourhoods, bounded VC-dimension becomes a concrete combinatorial way of saying that the graph cannot encode arbitrary set systems, and this viewpoint has led to stronger regularity lemmas and structural results for hereditary graph classes~\cite{AlonFischerNewman2007,LovaszSzegedy2010,AlonBaloghBollobasMorris2011}. The same principle also manifests in extremal graph theory: For graphs and hypergraphs of bounded description complexity, including semi-algebraic and distal relations, one obtains much sharper regularity lemmas and Zarankiewicz-type bounds than in the general setting~\cite{FoxPachSuk2016,FoxPachShefferSukZahl2017,ChernikovStarchenko2018Distal,ChernikovGalvinStarchenko2020,Do2018}, and the Zarankiewicz problem has been studied directly for bipartite graphs of bounded VC-dimension \cite{JanzerPohoata2024}. A parallel line of work in extremal set theory, originating with the Frankl--Pach theorem on uniform set systems of bounded VC-dimension \cite{FranklPach1984}, has recently been improved by Ge, Xu, Yip, Zhang and Zhao \cite{GeXuYipZhangZhao2026}, by Chao, Xu, Yip and Zhang \cite{ChaoXuYipZhang2025}, and by Yang and Yu \cite{YangYu2025}, suggesting that bounded VC-dimension is not only a regularity condition but also a robust extremal hypothesis connecting incidence geometry, Tur\'an-type problems and uniform set systems.

In induced Ramsey theory, bounded VC-dimension has been a subject of extensive investigation. Fox, Pach and Suk proved strong near-polynomial Erd\H{o}s--Hajnal type bounds for graphs of bounded VC-dimension~\cite{FoxPachSuk2019}, building on a broader line of induced Ramsey-type and induced-density results~\cite{FoxSudakov2008,BucicNguyenScottSeymour2024,NguyenScottSeymour2026VII}. This direction culminated in the theorem of Nguyen, Scott and Seymour, who proved that for every fixed $d$, the class of graphs with $\operatorname{VCdim}(G)\le d$ has the Erd\H{o}s--Hajnal property and, in fact, the polynomial R\"odl property \cite{NguyenScottSeymour2025}. Let $\eta_d$ be the supremum of all $\eta>0$ such that every graph $G$ with $\operatorname{VCdim}(G)\le d$ contains a clique or a stable set of size at least $|G|^\eta$; their proof yields the explicit bound $\eta_d \ge 2^{-2^{O(d)}}$.

We write $\omega(G)$ for the clique number of $G$, $\alpha(G)$ for the
stability number of $G$, and $|G|$ for $|V(G)|$.
The main result of this paper is to give a quantitative improvement of $\eta_d$ from the reciprocal of double exponential to single exponential.

\begin{theorem}\label{thm:main}
There exists $h\ge 1$ such that, for every integer $d\ge 1$ and every graph $G$ with $\VC(G)\le d$,
\[
        \max\{\omega(G),\alpha(G)\}\ge |G|^{(hd)^{-d}}.
\]
Equivalently,
\[
        \eta_d\ge (hd)^{-d}.
\]
\end{theorem}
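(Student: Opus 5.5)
We follow the iterative sparsification framework of Nguyen, Scott and Seymour, but run the induction on a parameter tied directly to the VC-dimension, so that each step costs only a factor polynomial in $d$ in the exponent rather than squaring it. The target is a polynomial R\"odl-type statement. Let $\varepsilon\in(0,1/2)$ and let $G$ be an $n$-vertex graph with $\VC(G)\le d$. Then there is an induced subgraph $G[S]$ with $|S|\ge \varepsilon^{K_d}n$ and $G[S]$ either $\varepsilon$-sparse (maximum degree at most $\varepsilon|S|$) or $\varepsilon$-dense (complement $\varepsilon$-sparse), where $K_d\le (hd)^{d}/2$. Theorem~\ref{thm:main} then follows by the standard iteration. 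Apply this with $\varepsilon$ a small power of $n$; in a sparse piece, greedily split off anticomplete pairs, or recurse through the Erd\H{o}s--Hajnal substitution-style argument, until the homogeneous set has size $n^{c/K_d}$. Since bounded VC-dimension is hereditary and preserved under complementation up to $d\mapsto d+1$ (or exactly, after applying the definition to the complement), the recursion closes. The exponent we lose is $O(K_d)$, which gives $\eta_d\ge (hd)^{-d}$ after adjusting $h$.

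To prove the R\"odl-type statement we induct on a shattering parameter $k$, for $0\le k\le d+1$. The statement $P(k)$ says: if $G$ contains no set $T$ of size $k$ for which all $2^k$ traces are "robustly realised", each by at least $\varepsilon^{a_k}n$ vertices, then $G$ has an $\varepsilon$-sparse or $\varepsilon$-dense induced subgraph on $\varepsilon^{b_k}n$ vertices. The hypothesis $\VC(G)\le d$ gives $P(d+1)$ vacuously at the top level, since robust realisation implies shattering. We descend from $k+1$ to $k$. Suppose $G$ has no $\varepsilon$-sparse or $\varepsilon$-dense piece of the required size. Then a density-increment / dependent-random-choice step inside a random sample of $\mathrm{poly}(d)/\varepsilon$ vertices finds a $k$-set $T$ whose $2^k$ traces are all robust. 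Conditioning on each trace class produces $2^k$ large vertex classes. Within these classes, either some class contains a $(k+1)$-th vertex $v$ splitting each trace class into two robust halves, which gives robust shattering of $T\cup\{v\}$, or every vertex is almost constant on some class, which yields a large sparse or dense piece via a pigeonhole on $2^k$ classes. The key quantitative point is the recurrence. Each step must lose only a factor $\varepsilon^{O(k)}$, or $\varepsilon$ replaced by $\varepsilon^{O(1)}$ with a constant like $O(d)$, so that the exponents satisfy something like $b_k\le O(d)\,b_{k+1}+O(k)$. This yields $b_0\le (Cd)^{d}$ rather than the tower-type $2^{2^{O(d)}}$ recurrence coming from the original argument's use of induced-subgraph (forbidden bipartite pattern) induction.

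The main obstacle is that descending step. It must be arranged so that the robustness threshold $\varepsilon^{a_k}$ grows only multiplicatively by $O(d)$ per level, and not exponentially in $2^k$. Naively, splitting $2^k$ classes costs $2^{k}$ in the exponent, which would again give a double exponential. To avoid this, we would first use Haussler's packing lemma / the $\varepsilon$-net theorem for the neighbourhood set system: the number of distinct traces on any set is at most $O(|T|^d)$ by Sauer--Shelah. We would then work with only the polynomially many realised traces rather than all $2^k$ subsets. Using $\varepsilon$-approximations of size $O(d\varepsilon^{-2}\log(1/\varepsilon))$ to control trace densities, each level should cost a factor $\varepsilon^{O(d)}$, giving the single-exponential bound. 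Verifying that the pigeonhole over realised traces still forces either a new robust shattering or a sparse/dense piece, with these polynomial losses, is where the careful work lies.
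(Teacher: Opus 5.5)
There is a genuine gap, and it sits exactly at the step you defer. Your reduction from a polynomial R\"odl statement to Theorem~\ref{thm:main} is fine, and simpler than you suggest. Take $\varepsilon=n^{-1/(2K_d)}$; Tur\'an's theorem inside the $\varepsilon$-sparse piece of size $\ge n^{1/2}$ already gives a stable set of size $\tfrac12 n^{1/(2K_d)}$.

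The problem is the proof of the R\"odl statement itself. Consider the branch of your dichotomy where every vertex is almost constant on some trace class. Pigeonholing over the $2^{k+1}$ choices of class and type gives only a set $Y$ of about $2^{-k-1}n$ vertices that is almost anticomplete (or almost complete) to a single class $X_A$. That is a sparse bipartite pair. It constrains nothing inside $Y$ or inside $X_A$, so no $\varepsilon$-restricted induced subgraph follows.

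Passing from such pairs to a restricted induced subgraph is the whole content of iterative sparsification. To glue many mutually sparse or dense pieces together, you need a large homogeneous set in their pattern graph. Your hypothesis $P(k)$ is a statement about $G$ itself, so it gives no control over that pattern graph. Ramsey alone gives only logarithmic size, which means non-polynomial losses. So the recurrence $b_k\le O(d)\,b_{k+1}+O(k)$ is asserted, not derived.

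The proposed remedy via Sauer--Shelah, Haussler packing and $\varepsilon$-approximations does not touch this. For $|T|\le d+1$, a robustly shattered $T$ realises all $2^{|T|}$ traces by definition, so counting only realised traces saves nothing. A smaller point: your induction should start at the trivially robust $k=0$ and go upward, with $\VC(G)\le d$ ruling out $k=d+1$. $P(d+1)$ is not ``vacuous''.

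The missing idea is the paper's dimension-drop lemma (Lemma~\ref{lem:dimension-drop}). It uses your correct intuition, that a vertex mixed on every class supplies one more shattered coordinate, but applies it to an auxiliary graph rather than to $G$.

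\begin{itemize}
\item The paper inducts on external VC-dimension: $\mathcal C_r$ is the class of graphs with no bi-induced $U_{r+1}$. The inductive hypothesis is the Erd\H{o}s--Hajnal bound $|J|^{1/s_{r-1}}$ for $\mathcal C_{r-1}$.
\item That hypothesis is applied to the pattern graph $J$ of an $\varepsilon$-pure blockade obtained from the Fox--Pach--Suk regularity lemma (Lemma~\ref{lem:pure-blockade}).
\item Suppose an outside vertex $w$ is mixed on many blocks. A bi-induced $U_r$ in $J$ would then yield a bi-induced $U_{r+1}$ in $G$: combine it with $w$, a neighbour and a non-neighbour of $w$ in each block $C_A$, and one purity-chosen vertex from each of the other $r$ blocks. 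Hence $J\in\mathcal C_{r-1}$.
\item A homogeneous set of about $\ell^{1/a}$ blocks then lifts, by Lemma~\ref{lem:pattern-lift}, to a $y^9$-restricted induced subgraph.
\item Choosing $\ell\approx y^{-12a}$ with $a\approx s_{r-1}$ is exactly where the multiplicative loss $r s_{r-1}$ enters.
\item If no vertex is mixed on many blocks, some block has an anticomplete partner of size $(1-3y)|F|$. Iterating this gives a complete or anticomplete blockade.
\end{itemize}

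The paper also never proves a full polynomial R\"odl statement inside the induction. Via the fixed-power iteration $y\mapsto y^4$, it proves only a weaker alternative: either an $\eta$-restricted subgraph of size $\eta^{O(rs_{r-1})}|G|$, or a complete/anticomplete blockade of length $k$ and width $|G|/k^{O(rs_{r-1})}$. The cograph induction of Lemma~\ref{lem:linear-conversion} then turns this into $s_r\le Crs_{r-1}$, hence $s_d\le (Cd)^d$.
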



We also record several consequences of Theorem~\ref{thm:main}. They fall into two groups. The first group consists of consequences obtained from standard reductions once the Erd\H{o}s--Hajnal exponent is made explicit.
Here an \emph{Erd\H{o}s--Hajnal exponent} for a graph class $\mathcal C$ is a
positive number $c>0$ such that every $n$-vertex graph $G\in\mathcal C$ has a clique
or stable set of size at least $n^c$.
The same sparsification argument gives a polynomial R\"odl exponent at most $(Cd)^d$. Substituting the graph-level estimate into the Erd\H{o}s--Rado greedy argument gives explicit Ramsey bounds for hypergraphs whose relevant trace families have bounded VC-dimension, including the multicolor variant stated below. The induced-free and viral formulations of Nguyen, Scott and Seymour likewise become quantitative after applying the available bootstrapping theorems \cite{GishbolinerShapira2023,BucicFoxPham2024}. The tournament version follows by applying the graph theorem to the in-neighbourhood set system of the tournament. 
The second group consists of applications that give concrete criteria for verifying the bounded VC-dimension hypothesis in standard settings. 
We now explain how the required VC-dimension bounds are obtained in these applications. Boolean combinations of relations of bounded VC-dimension again have bounded VC-dimension, by the Sauer--Shelah lemma~\cite{VapnikChervonenkis1971,Sauer1972,Shelah1972}. NIP definable relations give a model-theoretic source of finite VC-dimension \cite{Simon2015,ChernikovStarchenkoThomas2021,ChernikovStarchenko2021NIP}. For semi-algebraic graphs, the required bounds follow from sign-pattern estimates, especially Warren's theorem and its standard incidence-geometric consequences \cite{Warren1968,ConlonFoxPachSudakovSuk2014,FoxPachShefferSukZahl2017}. Graphs whose adjacency matrices have bounded rank admit a direct linear-algebraic VC-dimension bound. Finally, graphs of bounded sign-rank, as well as graphs defined by thresholding dot products of vectors, have bounded VC-dimension through standard sign-rank estimates~\cite{Forster2002,AlonMoranYehudayoff2017}. Thus each of these settings falls under the main theorem and satisfies an explicit Erd\H{o}s--Hajnal
bound.

We briefly indicate the quantitative mechanism behind the proof. For $r\ge 0$, let $\mathcal C_r$ be the class of graphs with no bi-induced copy of the universal shattering bigraph $U_{r+1}$; thus $\operatorname{VCdim}(G)\le d$ implies $G\in\mathcal C_d$. Starting from a graph in $\mathcal C_r$, the regularity input produces a long $\varepsilon$-pure blockade, and the pattern graph records which pairs of blocks are almost complete. The key point is the mixed-block dimension drop. If a vertex is mixed on many blocks and the corresponding pattern graph
contained $U_r$, then the mixed vertex could be used as one more coordinate.
Indeed, in each block corresponding to a subset of $[r]$, we choose two
vertices, one adjacent and one nonadjacent to the mixed vertex. Purity then
allows us to choose vertices from the blocks corresponding to the
coordinates $1,\ldots,r$. These vertices form a bi-induced copy of
$U_{r+1}$, a contradiction. Hence the pattern graph lies in $\mathcal C_{r-1}$, so the induction hypothesis is used with the parameter $s_{r-1}$, where $s_{r-1}$ is the reciprocal of the corresponding Erd\H{o}s--Hajnal exponent. Iterating the resulting sparsification step gives, for every scale $\eta$, either an $\eta$-restricted induced subgraph of size at least $\eta^{O(r s_{r-1})}|G|$, or a complete or anticomplete blockade with polynomial width loss. A standard cograph induction converts this global alternative into the recursion $s_r \le C r s_{r-1}.$ Since $s_0=1$, this gives $s_r\le (Cr)^r$, and taking $r=d$ yields $\max\{\omega(G),\alpha(G)\}\ge |G|^{(Cd)^{-d}}.$

\medskip

The paper is organized as follows. In Section~\ref{sec:preliminaries} we introduce the local tools used in the proof.  We prove Theorem~\ref{thm:main} in Section~\ref{sec:main}. Section~\ref{sec:applications} records several quantitative applications of Theorem~\ref{thm:main}.We give a probabilistic obstruction for the Erd\H{o}s--Hajnal exponent and conclude in Section~\ref{sec:remarks}.

\section{Preliminaries and local lemmas}\label{sec:preliminaries}

A useful way to prove Erd\H{o}s--Hajnal-type results is to find large induced subgraphs whose edge structure becomes easier to control.
In this section we set up the local tools needed for this approach.  We
first use pure blockades to record approximate complete or anticomplete
relations between blocks, and then show that these approximate
patterns can be lifted to restricted induced subgraphs.  The key step is a dimension-drop argument: If a vertex is mixed on many blocks,
then the associated pattern graph belongs to a lower external
VC-dimension class.  This allows us to combine induction with iterative
sparsification, leading either to a more restricted induced subgraph or to
a large complete or anticomplete blockade.
For a graph $G$, write $V(G)$ and $E(G)$ for its vertex and edge sets,
$e(G)=|E(G)|$, $\Delta(G)$ for its maximum degree, and $\overline G$
for its complement.

\subsection{Blockades and restricted graphs}

A \emph{blockade} in a graph $G$ is a sequence
$
        \mathcal B=(B_1,\ldots,B_k)
$
of pairwise disjoint subsets of $V(G)$, called \textit{blocks}. 
For a positive integer $k$, write $[k]=\{1,\ldots,k\}$.
Its \emph{length} is $k$ and its \emph{width} is $\min_{i\in[k]} |B_i|$.  If its length is at least $\ell$ and its width is at least $w$, we call it an $(\ell,w)$-blockade.

For disjoint vertex sets $A,B\subseteq V(G)$ and $\eps>0$, say that $A$ is \textit{$\eps$-sparse} to $B$ in $G$ if every vertex of $A$ has at most $\eps |B|$ neighbours in $B$.  The pair $(A,B)$ is \textit{$\eps$-pure} if $A$ is $\eps$-sparse to $B$ and $B$ is $\eps$-sparse to $A$ either in $G$ or in $\overline G$.  A blockade is \textit{$\eps$-pure} if every two distinct blocks form an $\eps$-pure pair.
For disjoint sets $A,B\subseteq V(G)$, we say that $A$ is \textit{complete} to $B$
if every vertex of $A$ is adjacent to every vertex of $B$, and
\textit{anticomplete} to $B$ if there are no edges between $A$ and $B$. A blockade
$(B_1,\ldots,B_k)$ is \textit{complete}, respectively \textit{anticomplete}, if every two
distinct blocks are complete, respectively anticomplete, to each other.
We call a pair $(A,B)$ \textit{pure} if it is either complete or anticomplete.
A graph $G$ is \textit{$\eps$-sparse} if its maximum degree is at most $\eps |G|$, and is \textit{$\eps$-restricted} if either $G$ or $\overline G$ is $\eps$-sparse.

Let $\mathcal B=(B_1,\ldots,B_\ell)$ be an $\varepsilon$-pure blockade in
a graph $G$. 
Let $J$ be the \textit{pattern graph} of $\mathcal B$ with $V(J)=[\ell]$ such that $ij\in E(J)$ if and only if $B_i,B_j \text{ are } \varepsilon\text{-sparse to each other in }\overline G.$
Since $\mathcal B$ is $\varepsilon$-pure, every non-edge of $J$ corresponds
to a pair of blocks that is $\varepsilon$-sparse to each other in $G$.

The pattern graph records only the relations between blocks. 
In this paper, a \textit{homogeneous set} in a graph is a vertex set that is
either a clique or a stable set.
The next
lemma shows that a homogeneous set in this pattern already gives a
restricted induced subgraph in the original graph.

\begin{lemma}\label{lem:pattern-lift}
Let $\varepsilon>0$, and let $\mathcal B=(B_1,\ldots,B_\ell)$ be an
$\varepsilon$-pure blockade in a graph $G$, with
$|B_i|=m$, for all $i\in[\ell]$. Let $J$ be the pattern graph of $\mathcal B$.
If $J$ contains a clique or a stable set $I$ with $|I|=q$, then
$G[\bigcup_{i\in I} B_i]$ is $(\varepsilon+q^{-1})$-restricted. In particular,
$G$ contains an $(\varepsilon+q^{-1})$-restricted induced subgraph on $qm$
vertices.
\end{lemma}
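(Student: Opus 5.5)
The plan is a direct degree count, split according to whether $I$ is a stable set or a clique in $J$. Set $W=\bigcup_{i\in I}B_i$, so $|W|=qm$ because the blocks are pairwise disjoint. The second assertion then follows at once from the first, since $G[W]$ is an induced subgraph on exactly $qm$ vertices. So everything reduces to showing that $G[W]$ is $(\eps+q^{-1})$-restricted.

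\emph{Case 1: $I$ is a stable set of $J$.} For distinct $i,j\in I$ we have $ij\notin E(J)$. By the remark after the definition of the pattern graph, $B_i$ and $B_j$ are then $\eps$-sparse to each other in $G$. Fix $v\in B_i$ with $i\in I$. Its neighbours in $W$ lie in two places.
\begin{itemize}
\item Inside its own block $B_i$: at most $m-1<m$ of them.
\item In the other $q-1$ blocks of $I$: at most $\eps m$ in each, by sparsity.
\end{itemize}
Hence
\[
\deg_{G[W]}(v)\le m+(q-1)\eps m\le m+\eps qm=(q^{-1}+\eps)|W|.
\]
Thus $\Delta(G[W])\le(\eps+q^{-1})|W|$, so $G[W]$ is $(\eps+q^{-1})$-sparse and therefore $(\eps+q^{-1})$-restricted.

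\emph{Case 2: $I$ is a clique of $J$.} Here every pair of distinct blocks indexed by $I$ is $\eps$-sparse to each other in $\overline G$. Running the identical count in $\overline G[W]=\overline{G[W]}$ shows that $\overline{G[W]}$ is $(\eps+q^{-1})$-sparse. This again means $G[W]$ is $(\eps+q^{-1})$-restricted.

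There is no real obstacle. The only points needing care are that the sparsity conditions are measured relative to $|B_j|=m$, which turns the sum into $(q-1)\eps m$, and that the own-block contribution is bounded crudely by $m=|W|/q$; this crude bound is exactly the source of the $q^{-1}$ term.
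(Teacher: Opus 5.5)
Your proposal is correct and follows the paper's proof: both bound the degree of $v\in B_i$ in $G[W]$ by $(m-1)+(q-1)\eps m\le(\eps+q^{-1})|W|$ when $I$ is stable, and run the same count in $\overline G$ when $I$ is a clique. The only difference is that you spell out the $qm$-vertex consequence, which the paper leaves implicit.
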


\begin{proof}
Let $I\subseteq[\ell]$ be a clique or stable set in $J$ of size $q$, and let $S=\bigcup_{i\in I}B_i$.  If $I$ is stable in $J$, then for all distinct $i,j\in I$, the pair $B_i,B_j$ is $\eps$-sparse to each other in $G$.  Hence, for every $v\in B_i\subseteq S$,
\[
        d_{G[S]}(v)
        \le (m-1)+(q-1)\eps m
        \le (1/q+\eps)|S|.
\]
Thus $G[S]$ is $(\eps+1/q)$-sparse.  If $I$ is a clique in $J$, the same argument in $\overline G$ shows that $\overline{G[S]}$ is $(\eps+1/q)$-sparse.  
Therefore, $G[S]$ is $(\eps+1/q)$-restricted.
\end{proof}

\subsection{Universal bigraphs and external VC-dimension}

A \emph{bigraph} is a graph together with a specified bipartition $(V_1,V_2)$.
If $H$ is a bigraph, a \emph{bi-induced copy} of $H$ in a graph $G$ is an
injective map $\phi:V(H)\to V(G)$ such that, for all $x\in V_1(H)$ and
$y\in V_2(H)$, we have $xy\in E(H)$ if and only if
$\phi(x)\phi(y)\in E(G)$.
No condition is imposed on pairs of vertices that lie on the same side of
the bigraph.

For a graph $G$, a set $S\subseteq V(G)$ is \emph{externally shattered}
if, for every $A\subseteq S$, there exists a vertex
$v_A\in V(G)\setminus S$ such that
$
        N_G(v_A)\cap S=A.
$
The \emph{external VC-dimension} of $G$, denoted $\eVC(G)$, is the maximum
size of an externally shattered set.

For a set $X$, let $\mathcal P(X)$ denote the set of all subsets of $X$.
For an integer $r\ge 0$, let $U_r$ be the bigraph with bipartition
$([r],\mathcal P([r]))$ 
such that for $i\in [r]$ and $A\in\mathcal P([r])$, we have
$iA\in E(U_r)$ if and only if $i\in A$.
Let $\mathcal C_r$ be the class of graphs with no bi-induced copy of
$U_{r+1}$. Then $\mathcal C_r$ is exactly the class of graphs with
external VC-dimension at most $r$. It follows from the definitions that if
$\VC(G)\le d$, then $G\in\mathcal C_d$.
In this paper, $\log$ denotes the logarithm of base $2$.



\begin{lemma}\label{lem:external-to-vc}
For $r\ge 0$,
if $G\in\mathcal C_r$, then
\[
        \VC(G)< r+1+\ceil{\log_2(r+2)}.
\]
\end{lemma}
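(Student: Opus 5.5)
We argue by contraposition. Set $t=\ceil{\log_2(r+2)}$ and suppose $\VC(G)\ge r+1+t$. We will find an externally shattered set of size $r+1$; this gives a bi-induced copy of $U_{r+1}$, so $G\notin\mathcal C_r$. Fix a set $S$ with $|S|=r+1+t$ that is shattered by the neighbourhood family. The only gap between ordinary and external shattering is that a witness $v_A$ with $N_G(v_A)\cap S=A$ might itself lie in $S$. There are at most $|S|$ such internal witnesses. So the idea is to spend $t$ coordinates of $S$ as a ``tag'' that forces the witnesses we actually use to lie outside the part we keep.

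The construction is as follows. Split $S=R\cup T$ with $|R|=r+1$ and $|T|=t$. For each $A\subseteq R$ and each $B\subseteq T$, choose a witness $w_{A,B}\in V(G)$ with $N_G(w_{A,B})\cap S=A\cup B$. For fixed $A$, the $2^t$ witnesses $w_{A,B}$ ($B\subseteq T$) are pairwise distinct, since their traces on $S$ differ. We want some $B$ with $w_{A,B}\notin R$.

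The point needing care is that a single vertex $x\in R$ can serve as $w_{A,B}$ for at most one pair $(A,B)$. This holds because the trace of $x$ on $S$ is determined, and it must equal $A\cup B$. So for each fixed $A$, at most $|R|=r+1$ of the $2^t$ choices of $B$ can give a witness lying in $R$. Since $2^t\ge r+2>r+1$, some $B_A$ has $w_{A,B_A}\notin R$. This is exactly where the term $\ceil{\log_2(r+2)}$ comes from. Then $v_A:=w_{A,B_A}\in V(G)\setminus R$ satisfies $N_G(v_A)\cap R=A$, so $R$ is externally shattered and has size $r+1$.

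It remains to build the bi-induced copy. Map $i\in[r+1]$ to the elements of $R$ and each $A$ to $v_A$. This map is injective: distinct $A$ give distinct traces on $R$, so the $v_A$ are distinct, and all $v_A$ lie outside $R$. Adjacency across the bipartition is exactly membership, so this is a bi-induced copy of $U_{r+1}$, contradicting $G\in\mathcal C_r$. Hence $\VC(G)\le r+t$, which is the strict inequality claimed.

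The main obstacle is the counting step, namely making sure the internal witnesses cannot block every tag. I handle it by noting that each vertex of $R$ has a fixed trace on $S$ and can therefore block at most one tag. Vertices of $T$ cannot harm us, because only membership in $R$ matters for external shattering of $R$.
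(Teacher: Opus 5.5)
Your proof is correct and is essentially the paper's argument. The paper also fixes an $(r+1)$-subset of the shattered set and, for each pattern $A$, observes that the $2^{m-r-1}\ge r+2$ extensions of $A$ to the whole shattered set have pairwise distinct witnesses, at most $r+1$ of which can lie in the kept set; it then reads off a bi-induced copy of $U_{r+1}$, with only cosmetic differences from yours (your $R$ plays the role of the paper's $T$, and you first pass to a shattered set of size exactly $r+1+t$).
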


\begin{proof}
Suppose for a contradiction that there is a shattered set
$S\subseteq V(G)$ with
$$
 m:=|S|\ge r+1+\ceil{\log_2(r+2)}.
$$
Choose $T\subseteq S$ with $|T|=r+1$. We shall show that $G$ contains a
bi-induced copy of $U_{r+1}$.

For a fixed set $A\subseteq T$, there are exactly $2^{m-r-1}$ subsets $B_i,i\in[2^{m-r-1}]$ of $S$ such
that $B_i\cap T=A$, and by the choice of $m$ we have
$
2^{m-r-1}\ge r+2.
$
Since $S$ is shattered, for each such set $B_i$ there is a vertex $v_{B_i}\in V(G)$
such that
$
N_G(v_{B_i})\cap S=B_i.
$
For every different $i,j\in[2^{m-r-1}]$, 
$B_i$ and $B_j$ are distinct; hence the vertices $v_{B_i}$ are
distinct. Since $|T|=r+1$, at most $r+1$ of these vertices belong to $T$.
It follows that at least one vertex lies in $V(G)\setminus T$.
Consequently, for
every $A\subseteq T$, we may choose a vertex $w_A\in V(G)\setminus T$ such
that
$
N_G(w_A)\cap T=A.
$
Moreover, the vertices $w_A$ with $A\subseteq T$ are pairwise distinct.
Thus, after identifying
$T$ with $[r+1]$, the map sends one side of $U_{r+1}$ to $T$ and
the vertex on the other side indexed by $A\subseteq [r+1]$ to $w_A$,
and gives a bi-induced
copy of $U_{r+1}$ in $G$.
This contradicts the assumption that
$G\in\mathcal C_r$ and the lemma holds.
\end{proof}

For $\eps>0$, two disjoint vertex sets $A,B$ in a graph $G$ are
\emph{weakly $\eps$-sparse} in $G$ if there are at most
$\eps |A||B|$ edges of $G$ between $A$ and $B$. They are
\emph{weakly $\eps$-pure in $G$} if they are weakly $\eps$-sparse in
$G$ or weakly $\eps$-sparse in $\overline G$.
We shall use the following quantitative form of the ultra-strong regularity
lemma. It is due to Lovász and Szegedy~\cite[Section~4]{LovaszSzegedy2010},
with the polynomial dependence supplied by Fox, Pach and Suk~\cite[Theorem~1.3]{FoxPachSuk2019}.
We use it in the form stated by Nguyen, Scott and Seymour~\cite[Theorem~2.1]{NguyenScottSeymour2025}.
An \emph{equipartition} of a set $S$ is a partition
$S_1\cup\cdots\cup S_k$ of $S$ such that $|S_i|$ and $|S_j|$ differ by at
most one for all $i,j\in[k]$.

\begin{theorem}[Nguyen, Scott and Seymour \cite{NguyenScottSeymour2025}]\label{thm:usr}
There exists $K\ge 1$ such that the following holds. For
every $d\ge 1$, every $\eps\in(0,1/2)$, and every graph $G$ with
$\VC(G)\le d$, there is an equipartition
$V(G)=V_1\cup\cdots\cup V_L$, where
$\eps^{-1}\le L\le \eps^{-Kd}$, such that all but at most 
$\eps$-fraction of the pairs $(V_i,V_j)$ are weakly $\eps$-pure.
\end{theorem}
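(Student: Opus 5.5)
This is the quantitative ultra-strong regularity lemma of Lovász--Szegedy and Fox--Pach--Suk. I would reprove it by the standard packing argument: a partition into classes of vertices with nearly identical neighbourhoods, followed by an equitable refinement. Throughout, $n=|G|$, and for $u,v\in V(G)$ we write $\rho(u,v)=|N(u)\triangle N(v)|$, where $\triangle$ denotes symmetric difference.

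\emph{Step 1 (packing).} I will use Haussler's packing lemma. If $\VC(G)\le d$ and $v_1,\dots,v_t$ satisfy $\rho(v_a,v_b)>\delta n$ for all $a\neq b$, then $t\le (c/\delta)^{d}$ for an absolute constant $c$. Fix $\delta=\eps^{C}$ for a suitable absolute constant $C$. Choose a maximal $\delta n$-separated set $v_1,\dots,v_t$, and assign every vertex to a representative $v_a$ with $\rho(u,v_a)\le\delta n$. This gives a partition $V(G)=P_1\cup\dots\cup P_t$ with $t\le \eps^{-O(d)}$, in which each vertex is $\delta n$-close to the representative of its class.

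\emph{Step 2 (equitable refinement).} Set $m=\lfloor \eps^{2}n/t\rfloor$. Chop each $P_a$ into pieces of size $m$, leaving a remainder of fewer than $m$ vertices per class. There are at most $tm\le\eps^2 n$ leftover vertices in total. Distribute them one at a time over the pieces, so that every piece has size $m$ or $m+1$ and only a few vertices per piece are leftovers. This yields an equipartition $V_1\cup\dots\cup V_L$ with $\eps^{-1}\le L\le n/m\le \eps^{-2}t\le\eps^{-Kd}$. If $n$ is so small that $m<1$, I would take singletons; these are trivially weakly pure, and $L=n$ is then at most $\eps^{-Kd}$.

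\emph{Step 3 (most pairs are weakly pure).} Take pieces $X\subseteq P_a$ and $Y\subseteq P_b$, and let $M$ be their $0/1$ biadjacency matrix. Every row $u\in X$ differs from the trace $r=N(v_a)\cap Y$ in $|(N(u)\triangle N(v_a))\cap Y|$ entries. Every column $w\in Y$ differs from $c=N(v_b)\cap X$ in $|(N(w)\triangle N(v_b))\cap X|$ entries. So $M$ is close both to the row-constant matrix with rows $r$ and to the column-constant matrix with columns $c$. A matrix close to both is close to a constant matrix, and the total discrepancy bounds the distance to that constant.

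Let $\mathrm{err}(X,Y)$ denote the sum of these two discrepancies. Summing it over all pairs $(X,Y)$ gives at most $2n\cdot\delta n$. A pair fails to be weakly $\eps$-pure only if $\mathrm{err}(X,Y)\gtrsim \eps m^2$, since otherwise the density of $M$ is within $\eps$ of $0$ or $1$. By Markov's inequality, the number of bad pairs is at most $2\delta n^2/(\eps m^2)\lesssim \delta\eps^{-5}t^2$. This is at most $\eps L^2$ once $C$ is large, noting that $L\ge n/(m+1)\approx t\eps^{-2}$. The leftover vertices affect only an $O(\eps)$ fraction of the entries of each piece, which is absorbed by adjusting constants.

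\emph{Main obstacle.} The delicate step is the ``close to row-constant and column-constant implies close to constant'' estimate in Step 3, together with the bookkeeping needed to keep $L\le\eps^{-Kd}$ with $K$ absolute. I would first fix the exponent $C$ in $\delta=\eps^C$ so that the Markov bound wins. I would then verify that the only $d$-dependence enters through Haussler's bound $t\le(c/\delta)^d=\eps^{-O(d)}$; the refinement contributes only a factor $\eps^{-2}$, which is absorbed since $d\ge1$.
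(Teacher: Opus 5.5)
The paper does not prove this statement. It imports it as a black box: the Lov\'asz--Szegedy ultra-strong regularity lemma with the polynomial bounds of Fox--Pach--Suk, in the form of Nguyen--Scott--Seymour's Theorem~2.1. So your self-contained argument is a genuinely different route relative to the paper, and it is correct in outline; it is essentially the Fox--Pach--Suk proof.

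The step you flag as delicate is only a two-line computation.
\begin{itemize}
\item Write $R$, $C$ for the row-constant and column-constant matrices, and $\alpha=|r|/|Y|$, $\beta=|c|/|X|$.
\item Then $\|R-C\|_1=(\alpha+\beta-2\alpha\beta)|X||Y|\ge\min\{\alpha,1-\alpha\}|X||Y|$, while also $\|R-C\|_1\le e_1+e_2$.
\item Hence the density of $M$ is within $(2e_1+e_2)/(|X||Y|)$ of $0$ or $1$.
\end{itemize}
Combined with $\sum_{(X,Y)}(e_1+e_2)\le 2\delta n^2$ and Markov, the factor $t^2$ cancels, so $\delta$ of order $\eps^2$ suffices. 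The only $d$-dependence then enters through Haussler's bound, exactly as you say. Your route buys a self-contained proof with transparent dependence of $K$ on nothing but Haussler's constant; the paper's citation buys brevity.

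There is one concrete slip in Step~2. You cannot in general make every piece have size $m$ or $m+1$. There can be nearly $tm\approx\eps^2 n$ leftover vertices but only about $n/m\approx t\eps^{-2}$ pieces, so once $n\gg t\eps^{-4}$ the leftovers outnumber the pieces.

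The fix is routine:
\begin{itemize}
\item Distribute the leftovers as evenly as possible, so each piece receives $q$ or $q+1$ of them, with $q\le \ell/L$. Sizes still differ by at most one, so this is an equipartition.
\item To make the leftover fraction of each piece at most $\eps/4$, take $m$ slightly smaller, say $m=\lfloor \eps^3 n/(16t)\rfloor$. This gives $q/m\le\eps/8$, and you additionally need $1/m\le \eps/8$.
\item Weak $\eps/2$-purity of the cores $X_0\subseteq P_a$, $Y_0\subseteq P_b$ then transfers to weak $\eps$-purity of $(X,Y)$ via $e(X,Y)\le e(X_0,Y_0)+|X\setminus X_0||Y|+|X||Y\setminus Y_0|$. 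Your Markov count should be run on the cores.
\item Your singleton fallback must therefore be triggered whenever $m<8\eps^{-1}$, not only when $m<1$. This is harmless: in that case $n=O(t\eps^{-4})\le\eps^{-O(d)}$, so $L=n\le\eps^{-Kd}$.
\end{itemize}

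Finally, when $n<\eps^{-1}$, no equipartition into at least $\eps^{-1}$ nonempty parts exists at all. That is a defect of the statement as quoted, not of your argument, and it is irrelevant for the paper, since Lemma~\ref{lem:pure-blockade} assumes $|G|\ge\eps^{-br}$.
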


We find a pure blockade in graphs with external
VC-dimension at most $r$.

\begin{lemma}\label{lem:pure-blockade}
There exists $b\ge 1$ such that the following holds.
Let $r\ge 1$, $\eps\in(0,1/2)$ and $G\in\mathcal C_r$ with
$|G|\ge \eps^{-br}$. Then $G$ contains an
$(\eps^{-1},\eps^{br}|G|)$-blockade
$\mathcal B=(B_1,\ldots,B_\ell)$ such that $|B_i|=m$ for all
$i\in[\ell]$, where $m\le \eps^2|G|$, and for all distinct
$i,j\in[\ell]$, the pair $B_i,B_j$ is $\eps$-pure.
\end{lemma}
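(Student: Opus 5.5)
We will deduce the lemma from Theorem~\ref{thm:usr}, applied with a smaller parameter, followed by a cleaning step and a Turán-type selection of parts. By Lemma~\ref{lem:external-to-vc}, $G\in\mathcal C_r$ gives $\VC(G)\le d':=r+\ceil{\log_2(r+2)}\le 3r$. Set $\delta:=\eps^{c}$ for a suitable absolute constant $c$ ($c=4$ should suffice). Applying Theorem~\ref{thm:usr} with parameter $\delta$ yields an equipartition $V_1\cup\cdots\cup V_L$ with $\delta^{-1}\le L\le \delta^{-Kd'}\le \eps^{-3cKr}$. In this partition all but a $\delta$-fraction of the pairs are weakly $\delta$-pure. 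Each part has size at least $|G|/(2L)\ge \eps^{3cKr}|G|/2$. This is at least $1$ because $|G|\ge\eps^{-br}$ once $b$ is large compared with $cK$.

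\textbf{Selecting parts.} Form the auxiliary graph on $[L]$ whose edges are the pairs that are \emph{not} weakly $\delta$-pure. It has at most $\delta\binom L2$ edges, so its average degree is at most $\delta L$. By Turán's theorem (or by greedily deleting high-degree vertices), it has an independent set $I$ of size at least $L/(1+\delta L)\ge 1/(2\delta)$, using $L\ge\delta^{-1}$. Take $\ell:=\ceil{\eps^{-1}}$ indices from $I$; this is possible since $1/(2\delta)\ge 2\eps^{-1}$. Every pair among the corresponding parts $V_{i_1},\dots,V_{i_\ell}$ is weakly $\delta$-pure.

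\textbf{Cleaning.} Turning weak purity into $\eps$-purity is the main obstacle, because degrees must be controlled in both directions for every pair simultaneously. For each chosen part $V_{i_j}$ and each other chosen index $k$, call a vertex $v\in V_{i_j}$ \emph{bad for $k$} if it has more than $\eps|V_{i_k}|/2$ neighbours in $V_{i_k}$ when the pair is weakly sparse in $G$, or more than $\eps|V_{i_k}|/2$ non-neighbours when it is weakly sparse in $\overline G$. By Markov's inequality, at most $2\delta/\eps\cdot|V_{i_j}|$ vertices are bad for a given $k$. Summing over the at most $\ell\le 2\eps^{-1}$ indices, at most $4\delta\eps^{-2}|V_{i_j}|\le\tfrac12|V_{i_j}|$ vertices are bad, provided $\delta\le\eps^2/8$ (which $c=4$ guarantees for $\eps<1/2$). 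Remove the bad vertices, then trim every block to a common size $m$, taking $m$ to be the minimum of the surviving sizes capped at $\eps^2|G|$. Surviving parts have size at least $|V_{i_j}|/2$, and trimming a target block shrinks it by at most a factor of $4$. Hence each remaining vertex has at most $(\eps/2)|V_{i_k}|\le 2\eps\, m'$ neighbours (respectively non-neighbours) in a trimmed block of size $m'$. This is off by a constant factor. To fix it, run the argument with $\eps/4$ in place of $\eps$ in the badness threshold, which changes only the constant $c$.

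\textbf{Width and the cap.} The width requirement is $m\ge\eps^{br}|G|$. We have $m\ge\min(\eps^{3cKr}|G|/8,\ \eps^2|G|)$, which is at least $\eps^{br}|G|$ when $b\ge 3cK+3$ and $r\ge1$. The cap $m\le\eps^2|G|$ holds by construction. If the uncapped part size were already below $\eps^2|G|$ no trimming for the cap is needed; otherwise we trim down to $\eps^2|G|$, with the same degree accounting. Since the parameters are fixed before choosing $b$ as an absolute constant depending only on $K$, this gives the lemma.
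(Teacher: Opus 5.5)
Your proposal is correct and is essentially the paper's argument: apply Theorem~\ref{thm:usr} with $\delta$ a fixed power of $\eps$ (the paper takes $\delta=\eps^4/100$), take a Tur\'an-type stable set in the graph of non-weakly-pure pairs, delete the vertices having more than a constant multiple of $\eps$ times the part size as (non-)neighbours in some other part, and trim to a common size; the paper avoids your factor-$4$ trimming loss by choosing the $\ell$ parts all of the same size, which is possible because an equipartition has at most two part sizes. Two small slips need fixing: $c=4$ does not give $\eps^4\le\eps^2/8$ for $\eps$ near $1/2$ (a larger absolute $c$ does, as you anticipate), and the cap $m\le\eps^2|G|$ is automatic because every part has size at most $2|G|/L\le 2\delta|G|\le\eps^2|G|$, so your ``trim down to $\eps^2|G|$'' branch never occurs --- which matters, since trimming a block far below its original size would \emph{not} preserve your degree bounds.
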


\begin{proof}
Let $K\ge 1$ be the constant from Theorem~\ref{thm:usr}. Choose
$
        b=34K .
$
By Lemma~\ref{lem:external-to-vc}, every graph in $\mathcal C_r$ has
VC-dimension at most $3r$. Let $\delta=\eps^4/100$;
by Theorem~\ref{thm:usr}, we obtain an equipartition
$V(G)=V_1\cup\cdots\cup V_L$, where
$\delta^{-1}\le L\le \delta^{-3Kr}$, such that all but at most 
$\delta$-fraction of the pairs $(V_i,V_j)$ are weakly $\delta$-pure.

Let $R$ be the graph with vertex set $[L]$ in which two distinct indices
$i,j$ are adjacent if and only if the pair $(V_i,V_j)$ is not weakly
$\delta$-pure. Then $e(R)\le \delta L^2$, and by Turán's theorem,
$R$ contains a stable set $J$ with $|J|\ge (3\delta)^{-1}$. Since the
partition is an equipartition, at most two different part sizes occur.
Since $(3\delta)^{-1}\ge 30\eps^{-4}$, we may choose a set
$I\subseteq J$ with
$
        |I|=\ell=\ceil{\eps^{-1}}
$
such that $|V_i|=|V_j|$ for all $i,j\in I$.

Let $M=|V_i|$ for all $i\in I$. For each $i\in I$, we define
$B_i\subseteq V_i$ as follows. For every $j\in I\setminus\{i\}$, if
$(V_i,V_j)$ is weakly $\delta$-sparse in $G$, delete every vertex
$v\in V_i$ when $v$ has more than $\eps M/2$ neighbours in $V_j$; and if
$(V_i,V_j)$ is weakly $\delta$-sparse in $\overline G$, delete every vertex
$v\in V_i$ when $v$ has more than $\eps M/2$ non-neighbours in $V_j$.

For a fixed $j\in I\setminus\{i\}$, a simple counting argument shows that
we delete at most $2\delta M/\eps$ vertices from $V_i$.
Hence by the choice of $\delta$, the
total number of vertices deleted from $V_i$ is at most
\[
        \ell\cdot \frac{2\delta}{\eps}M
        \le 3\eps^{-1}\frac{2\delta}{\eps}M
        < \frac M2 .
\]
Thus $|B_i|\ge M/2$ for every $i\in I$. 
We may assume that $|B_i|=m$ with $m\ge M/2$ for all $i\in I$ by removing extra vertices if necessary.

\begin{claim}\label{claim:pair}
For all distinct $i,j\in I$, the pair $(B_i,B_j)$ is $\eps$-pure.
\end{claim}

{\renewcommand{\qedsymbol}{$\blacksquare$}
\begin{proof}[Proof of the Claim \ref{claim:pair}]
Suppose first that $(V_i,V_j)$ is weakly $\delta$-sparse in $G$. Then every
vertex in $B_i$ has at most $\eps M/2$ neighbours in $V_j$, and hence at most
$\eps M/2$ neighbours in $B_j$. Since $|B_j|\ge M/2$, this is at most
$\eps |B_j|$. The same argument with $i$ and $j$ shows that $B_j$ is
$\eps$-sparse to $B_i$ in $G$. Thus $B_i,B_j$ are $\eps$-sparse to each other
in $G$. The case in which $(V_i,V_j)$ is weakly $\delta$-sparse in
$\overline G$ is similar, and this proves the claim.
\end{proof}
}

It remains only to check the size of the blocks. Since
$
        L\le \delta^{-3Kr}
        =100^{3Kr}\eps^{-12Kr},
$
and since $\eps<1/2$, we have
\[
        \eps^{-br}
        =\eps^{-12Kr}\eps^{-22Kr}
        \ge \eps^{-12Kr}2^{22Kr}
        \ge 4\cdot 100^{3Kr}\eps^{-12Kr}
        =4\delta^{-3Kr}
        \ge 2L .
\]
Thus, by the hypothesis $|G|\ge \eps^{-br}$, we have $|G|\ge 2L$.
Therefore each part of the equipartition has size between $|G|/(2L)$ and
$2|G|/L$. Hence
\[
        |B_i|\le |V_i|\le \frac{2|G|}{L}\le 2\delta |G|
        \le \eps^2|G|.
\]
Also,
\[
        |B_i|\ge \frac M2\ge \frac{|G|}{4L}
        \ge \eps^{br}|G|.
\]
Thus $(B_i:i\in I)$ is an $(\eps^{-1},\eps^{br}|G|)$-blockade whose blocks
have equal size at most $\eps^2|G|$, and every two distinct blocks are
$\eps$-pure. This proves the lemma.
\end{proof}

\subsection{Dimension drop when a vertex is mixed}

Let $X\subseteq V(G)$ and let $v\in V(G)\setminus X$. We say that $v$ is
\emph{mixed on} $X$ if $v$ has both a neighbour and a non-neighbour in $X$.
Otherwise, $v$ is \emph{pure} on $X$.
The following key lemma allows us to induct on the external VC-dimension instead of the number of vertices of a large forbidden bigraph.

\begin{lemma}\label{lem:dimension-drop}
Let $r\ge 1$ and $F\in\mathcal C_r$. 
Assume that
$\mathcal B=(B_1,\ldots,B_\ell)$ is an $\eps$-pure blockade in $F$ and
$J$ is the pattern graph of $\mathcal B$ in $F$. Suppose that
$w\in V(F)\setminus\bigcup_{i\in[\ell]}B_i$ is mixed on $B_i$ for every
$i\in[\ell]$. If $2^{r+1}\eps<1$, then $J\in\mathcal C_{r-1}$.
\end{lemma}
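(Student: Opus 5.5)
We argue by contradiction: assume $J\notin\mathcal C_{r-1}$, so $J$ contains a bi-induced copy $\phi$ of $U_r$. Write $a_i=\phi(i)\in[\ell]$ for $i\in[r]$ and $c_A=\phi(A)\in[\ell]$ for $A\subseteq[r]$. These $r+2^r$ indices are distinct, and $a_i c_A\in E(J)$ if and only if $i\in A$. Our goal is to build a bi-induced copy of $U_{r+1}$ in $F$, which contradicts $F\in\mathcal C_r$. The new coordinate $r+1$ will be played by $w$, and the other coordinates $1,\dots,r$ by vertices $x_i\in B_{a_i}$.

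First we choose the representatives of the subsets. For each $A\subseteq[r]$, the vertex $w$ is mixed on $B_{c_A}$. So we can pick $y_A^+\in B_{c_A}$ adjacent to $w$ and $y_A^-\in B_{c_A}$ nonadjacent to $w$. For a subset $A'\subseteq[r+1]$, we use $y_{A}^+$ when $A'=A\cup\{r+1\}$ and $y_A^-$ when $A'=A$. This gives $2^{r+1}$ distinct vertices, all lying outside $\bigcup_i B_{a_i}$ and distinct from $w$ (since $w$ lies in no block). Adjacency to $w$ is then correct by construction.

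Next we choose each $x_i\in B_{a_i}$ so that it has the correct adjacency to all $2^{r+1}$ vertices $y_A^{\pm}$. Fix $i$ and $A$. If $i\in A$, then $a_ic_A\in E(J)$, so $B_{c_A}$ is $\eps$-sparse to $B_{a_i}$ in $\overline F$. Hence each of $y_A^{+}$ and $y_A^-$ has at most $\eps|B_{a_i}|$ non-neighbours in $B_{a_i}$. If $i\notin A$, the pair is a non-edge of $J$, and by $\eps$-purity each $y_A^\pm$ has at most $\eps|B_{a_i}|$ neighbours in $B_{a_i}$. Summing over all $2^{r+1}$ vertices $y_A^\pm$, the number of "bad" vertices in $B_{a_i}$ is at most $2^{r+1}\eps|B_{a_i}|<|B_{a_i}|$. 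So some $x_i\in B_{a_i}$ is good: it is adjacent to $y_A^\pm$ exactly when $i\in A$. The $x_i$ lie in distinct blocks, so they are distinct, and they are distinct from $w$ and from the $y$'s.

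Finally, the map $i\mapsto x_i$ for $i\le r$, $r+1\mapsto w$, and $A'\mapsto y$ as above is injective. It is a bi-induced copy of $U_{r+1}$, which is the desired contradiction. The main care point is to use the sparsity in the right direction. We need that each single vertex $y$ in $B_{c_A}$ has few wrong adjacencies into $B_{a_i}$, which is the "$B_{c_A}$ is $\eps$-sparse to $B_{a_i}$" half of purity. The union bound over the $2^{r+1}$ constraints is exactly where the hypothesis $2^{r+1}\eps<1$ enters. No condition on equal block sizes is needed.
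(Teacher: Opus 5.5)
Your proof is correct and follows the paper's argument essentially step for step. You choose a neighbour and a non-neighbour of $w$ in each subset-indexed block, use purity with a union bound over these $2^{r+1}$ vertices to pick each $x_i$, and let $w$ play the extra coordinate of $U_{r+1}$; the only cosmetic difference is that you label the new coordinate $r+1$ where the paper uses $0$.
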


\begin{proof}
Suppose not. We may assume that  $J$ contains a bi-induced copy of $U_r$. 
After relabeling, we may assume that the blocks $B_1,\ldots,B_r$
correspond to one side of this copy of $U_r$. For each
$A\subseteq [r]$, let $C_A$ be the block corresponding to the vertex of another side
indexed by $A$.
 These blocks are all
distinct, and by the definition of the pattern graph, for every
$i\in[r]$ and every $A\subseteq[r]$, the pair $B_i,C_A$ is
$\eps$-sparse to each other in $\overline F$ if $i\in A$, and is
$\eps$-sparse to each other in $F$ if $i\notin A$.

Since $w$ is mixed on $C_A$, for every $A\subseteq[r]$ we may choose two
vertices $u_{A,0},u_{A,1}\in C_A$ such that $u_{A,0}$ is nonadjacent to
$w$, and $u_{A,1}$ is adjacent to $w$.

We claim that for each $i\in[r]$, there exists a vertex $x_i\in B_i$ such that
for every $A\subseteq[r]$ and every $b\in\{0,1\}$ the vertex $x_i$ is
adjacent to $u_{A,b}$ if and only if $i\in A$. Fix $i\in[r]$. If $i\in A$,
then $B_i,C_A$ are $\eps$-sparse to each other in $\overline F$, and so
$u_{A,b}$ has at most $\eps |B_i|$ non-neighbours in $B_i$. If $i\notin A$,
then $B_i,C_A$ are $\eps$-sparse to each other in $F$, and so $u_{A,b}$ has
at most $\eps |B_i|$ neighbours in $B_i$. 
Thus, for each pair $(A,b)$, at most $\eps |B_i|$ vertices of $B_i$ fail
the required adjacency condition with $u_{A,b}$. Since there are $2^{r+1}$ choices of $(A,b)$ and $2^{r+1}\eps<1$, fewer
than $|B_i|$ vertices of $B_i$ fail at least one of these conditions. Hence there exists a vertex
$x_i\in B_i$ satisfying all the required conditions. This proves the claim.

Now consider the vertices $w,x_1,\ldots,x_r$ and the vertices
$u_{A,b}$, where $A\subseteq[r]$ and $b\in\{0,1\}$. These vertices are all
distinct,
since the blocks $B_1,\ldots,B_r$ and $C_A$, $A\subseteq[r]$, are
distinct, while $w$ lies outside all blocks of the blockade, and
$u_{A,0}\ne u_{A,1}$ by their adjacency to $w$.

We identify one side of $U_{r+1}$ with $\{0\}\cup[r]$, where $0$
corresponds to $w$ and $i$ corresponds to $x_i$. For each
$A\subseteq[r]$ and $b\in\{0,1\}$, let $A_b=A$ if $b=0$ and $A_b=A\cup\{0\}$ if $b=1$. Then
$u_{A,b}$ is adjacent to $w$ if and only if $0\in A_b$, and, by the claim,
$u_{A,b}$ is adjacent to $x_i$ if and only if $i\in A_b$. Thus these
vertices form a bi-induced copy of $U_{r+1}$ in $F$, contrary to
$F\in\mathcal C_r$.
This proves the lemma.
\end{proof}

\subsection{Restricted induced subgraph or complete or anticomplete blockade}

Next we handle the sparse case.  Starting from a large pure blockade, either
many mixed incidences allow us to apply the dimension-drop lemma and lift
a homogeneous pattern, or one block has a large anticomplete set
outside it.
Throughout the next few lemmas, we use the induction hypothesis in the
following form. Fix $a\ge \max\{r,2\}$ such that
$\max\{\omega(J),\alpha(J)\}\ge |J|^{1/a}$ for every $J\in\mathcal C_{r-1}$.

\begin{lemma}\label{lem:sparse-pair}
Let $b\ge 1$ be given by Lemma~\ref{lem:pure-blockade}, $r\ge 1$ and $a\ge \max\{r,2\}$.
Assume that every
$n$-vertex graph in $\mathcal C_{r-1}$ contains a clique or a stable set of
size at least $n^{1/a}$.
There exist constants $c\in(0,1/16)$ and $h_1\ge 1$ such that
if
$F\in\mathcal C_r$ is $y$-sparse, where $0<y<c$,
then 
one of the following holds:
\begin{enumerate}[(i)]
\item $F$ contains a $y^9$-restricted induced subgraph with at least
$y^{h_1ra}|F|$ vertices;
\item there is an anticomplete pair $(A,B)$ in $F$ such that
$|A|\ge y^{h_1ra}|F|$ and $|B|\ge (1-3y)|F|$.
\end{enumerate}
\end{lemma}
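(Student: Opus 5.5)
We apply Lemma~\ref{lem:pure-blockade} with a small parameter $\eps$, polynomial in $y$ (say $\eps=y^{9}/2$, also small enough that $2^{r+1}\eps<1$). Since $F$ is $y$-sparse and $F\in\mathcal C_r$, we may assume $|F|\ge \eps^{-br}$; otherwise (i) holds trivially with a single vertex, after choosing $h_1$ large. We obtain an $\eps$-pure blockade $(B_1,\ldots,B_\ell)$ with $\ell\ge\eps^{-1}$, all blocks of equal size $m\ge\eps^{br}|F|$, and $m\le\eps^2|F|$. Let $W=V(F)\setminus\bigcup_i B_i$. Because $\ell m\le \eps|F|$ (up to rounding), $W$ contains almost all of $F$.

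\textbf{Case 1: some $w\in W$ is mixed on many blocks.} Suppose some $w\in W$ is mixed on at least $q_0$ of the blocks, where $q_0=y^{-9a}$ or similar. Restrict to these blocks; the subblockade is still $\eps$-pure. By Lemma~\ref{lem:dimension-drop}, its pattern graph $J$ lies in $\mathcal C_{r-1}$. The induction hypothesis then gives a homogeneous set $I$ in $J$ with $|I|\ge q_0^{1/a}\ge y^{-9}$. Lemma~\ref{lem:pattern-lift} yields an $(\eps+|I|^{-1})\le y^9$-restricted induced subgraph on $|I|m$ vertices. Here $|I|m\ge \eps^{br}|F|=y^{O(r)}|F|$, which is at least $y^{h_1ra}|F|$. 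This gives (i).

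\textbf{Case 2: every $w\in W$ is mixed on fewer than $q_0$ blocks.} Then $w$ is pure on all other blocks. We use sparsity to rule out completeness: $w$ has at most $y|F|$ neighbours, while each block has $m$ vertices. So $w$ can be complete to at most $y|F|/m$ blocks, which is too many to be useful directly. Instead we count. Call a pair $(w,i)$ good if $w$ is anticomplete to $B_i$. Each $w$ is mixed on fewer than $q_0$ blocks and complete to at most $y|F|/m$ blocks. Summing over $w$, the number of complete incidences is at most $\sum_w \deg(w)/m\le y|F|^2/m$. Hence on average a block $B_i$ is complete to at most $y|F|$ vertices of $W$ and mixed to at most $q_0|W|/\ell$ vertices. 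With $\ell\ge\eps^{-1}$ and $q_0\eps\le y$ (which the choice $\eps\ll y^{9a}$ ensures, so we take $\eps=y^{10a}$ instead), some block $B_i$ has at least $(1-3y)|F|$ vertices of $W$ anticomplete to it. Take $A=B_i$ and $B$ to be those vertices; then $|A|=m\ge\eps^{br}|F|\ge y^{10abr}|F|$, and setting $h_1=10b+O(1)$ gives (ii).

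\textbf{Main obstacle.} The step requiring care is balancing the parameters. $\eps$ must be small enough that the mixed count $q_0|W|/\ell$ is at most $y|F|$, and that $2^{r+1}\eps<1$ and $\eps\le y^9/2$ all hold. At the same time, $\eps$ must stay only polynomially small in $y$ with exponent $O(a)$, so that the width $\eps^{br}|F|$ is $y^{O(ra)}|F|$. We also need $y<c$ to absorb the constant factors and rounding.
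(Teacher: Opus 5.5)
\textbf{Verdict: the overall strategy is right and matches the paper's, but the Case 2 count of complete vertices fails as written; the fix is a one-line argument.} The shared structure is: an $\eps$-pure blockade with $\eps=y^{O(a)}$; then either some $w\in W$ is mixed on many blocks (dimension drop, induction hypothesis, pattern lift), or some block has few mixed and few complete vertices outside it.

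\textbf{The gap in Case 2.} You count complete incidences from the $W$ side and get at most $\sum_{w}\deg(w)/m\le y|F|^2/m$. Dividing by $\ell$ gives an average of $y|F|^2/(\ell m)$ per block, not $y|F|$. The blocks cover only $\ell m\le 2\eps|F|$ vertices, so this average is at least $y|F|/(2\eps)>|F|$, which is vacuous. Your inference implicitly treats the blocks as covering $F$.

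\textbf{The fix.} Apply sparsity on the block side instead. If $X\subseteq W$ is complete to $B_i$, then any single vertex of $B_i$ is adjacent to all of $X$, so $|X|\le y|F|$. This holds for every block, not just on average. Equivalently, double count edges between $W$ and $\bigcup_i B_i$ using the degrees of vertices inside the blocks. That gives $m\cdot(\text{number of complete incidences})\le \ell m\cdot y|F|$. With the per-block bound in hand, pick the block minimising the number of mixed vertices, as the paper does. Then $|W|-y|W|-y|F|\ge(1-3y)|F|$ goes through.

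\textbf{A smaller slack problem in Case 1.} With $q_0=y^{-9a}$ you only get $|I|\ge y^{-9}$. Lemma~\ref{lem:pattern-lift} then gives an $(\eps+y^{9})$-restricted subgraph, which is not $y^9$-restricted. You need a larger threshold. The paper's choice works: $\eps=y^{12a}$ and threshold $y\ell$. This gives $|J|\ge y^{1-12a}$, hence $|I|\ge y^{-12+1/a}\ge y^{-11}$ and $\eps+y^{11}\le y^9$. It also keeps (threshold)$/\ell= y$, which is what Case 2 needs.

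\textbf{Trimming the blockade.} Lemma~\ref{lem:pure-blockade} only guarantees length at least $\eps^{-1}$. Trim to exactly $\lceil\eps^{-1}\rceil$ blocks so that $\ell m\le 2\eps|F|\le y|F|$ is actually justified.
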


\begin{proof}
Choose $c>0$ sufficiently small with $0<y<c$ such that
 $2^{r+1}\eps<1$ with $\eps=y^{12a}$. Increase $h_1$ if necessary such that
$h_1\ge 12b$.
We may assume that $|F|>\eps^{-br}$; for otherwise
(i) holds as $y^{h_1ra} |F| < 1$. 
By Lemma~\ref{lem:pure-blockade} and removing blocks if necessary,
there is an $\eps$-pure blockade
$\mathcal B=(B_1,\ldots,B_\ell)$ in $F$ with
$\ell=\ceil{\eps^{-1}}$, with $|B_i|=m$ for all $i\in[\ell]$, and with
$m\ge \eps^{br}|F|$. In particular,
$
m\ge y^{12bra}|F|
$
and, since $m\le \eps^2|F|$,
$$
\ell m\le 2\eps^{-1}\eps^2|F|=2\eps |F|\le y|F|.
$$
Let
$
D=V(F)\setminus \bigcup_{i\in[\ell]}B_i.
$
Suppose that some vertex $w\in D$ is mixed on at least $y\ell$ of the
blocks. Let $\mathcal B'$ be the subblockade consisting of these blocks,
and let $J$ be the pattern graph of $\mathcal B'$ in $F$. Since
$2^{r+1}\eps<1$, Lemma~\ref{lem:dimension-drop} gives
$J\in\mathcal C_{r-1}$. Also,
$
|J|\ge y\ell\ge y\eps^{-1}=y^{1-12a}.
$
By the induction hypothesis, $J$ contains a clique or a stable set $I$ with
$
|I|\ge |J|^{1/a}\ge y^{-12+1/a}\ge y^{-11}.
$
By applying Lemma~\ref{lem:pattern-lift} to the corresponding subblockade, we have that
$
S=\bigcup_{i\in I}B_i
$
induces an $(\eps+|I|^{-1})$-restricted subgraph of $F$. By the choice of
$c$,
$
\eps+|I|^{-1}\le \eps+y^{11}\le y^9.
$
Thus $F[S]$ is $y^9$-restricted. Moreover,
$$
|S|\ge m\ge y^{12bra}|F|\ge y^{h_1ra}|F|,
$$
and so (i) holds.

We may therefore assume that every vertex of $D$ is mixed on fewer than
$y\ell$ blocks. Thus, there exists $i\in[\ell]$ such that at most $y|D|$ vertices of $D$
are mixed on $B_i$.
Let $X$ be the set of vertices in $D$ that are complete to $B_i$. Since
$F$ is $y$-sparse, we have $|X|\le y|F|$; otherwise every vertex of $B_i$
would have more than $y|F|$ neighbours in $F$, a contradiction.
Let $B$ be the set of vertices in $D$ that are anticomplete to $B_i$.
Every vertex of $D$ that is neither mixed on nor complete to $B_i$ belongs
to $B$. Hence
$$
|B|\ge |D|-y|D|-y|F|
     = |F|-\ell m-y|D|-y|F|
     \ge (1-3y)|F|,
$$
using $|D|\le |F|$ and $\ell m\le y|F|$.  
We obtain an
anticomplete pair $(B_i,B)$ in $F$. Finally,
$$
|B_i|=m\ge y^{12bra}|F|\ge y^{h_1ra}|F|.
$$
Thus (ii) holds.
This proves the lemma.
\end{proof}

The previous lemma gives only one large anticomplete pair.  We now repeat
the same argument inside the large remainder.  This either produces a more
restricted induced subgraph, or grows the pair into a complete or
anticomplete blockade.

\begin{lemma}\label{lem:pair-to-blockade}
Let $c$ be the constant given by Lemma~\ref{lem:sparse-pair}, $r\ge 1$, and $a\ge \max\{r,2\}$.
Assume that every $n$-vertex
graph in $\mathcal C_{r-1}$ contains a clique or a stable set of size at
least $n^{1/a}$.
There exists $h_2\ge 1$ such that 
if $F\in\mathcal C_r$ is $y$-restricted where
$0<y<c^2$, then one of the following holds:
\begin{enumerate}[(i)]
\item $F$ contains a $y^4$-restricted induced subgraph with at least
$y^{h_2ra}|F|$ vertices;
\item there is a complete or anticomplete blockade in $F$ of length at
least $y^{-1/2}$ and width at least $y^{h_2ra}|F|$.
\end{enumerate}
\end{lemma}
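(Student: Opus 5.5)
We first reduce to the sparse case: since $\mathcal C_r$ is closed under complementation (complementing swaps adjacency to each $U_{r+1}$-side vertex, and the family $\mathcal P([r+1])$ is closed under complements, so a bi-induced $U_{r+1}$ in $\overline F$ gives one in $F$), and since a $y^4$-restricted subgraph or a complete/anticomplete blockade in $\overline F$ is the corresponding object in $F$, we may assume $F$ is $y$-sparse. The plan is to iterate Lemma~\ref{lem:sparse-pair} about $k:=\ceil{y^{-1/2}}$ times, each round peeling off one anticomplete block and continuing inside the large remainder, so that the peeled blocks together form an anticomplete blockade.

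Concretely, set $F_0=F$ and, for $t=0,1,\ldots$, maintain an induced subgraph $F_t$ with $|F_t|\ge (1-3y')^t|F|$ together with blocks $A_1,\ldots,A_t$ that are pairwise anticomplete and each anticomplete to $V(F_t)$. At step $t$, the graph $F_t$ is an induced subgraph of $F$, hence lies in $\mathcal C_r$, and it is $y'$-sparse with $y'=2y$ say, because $\Delta(F_t)\le y|F|\le 2y|F_t|$ as long as $|F_t|\ge |F|/2$. Apply Lemma~\ref{lem:sparse-pair} to $F_t$ with parameter $y'$; this is legitimate since $y<c^2$ gives $y'<c$. If outcome (i) occurs, we get a $y'^9$-restricted induced subgraph of size at least $y'^{h_1ra}|F_t|\ge y'^{h_1ra}|F|/2$; since $y'^9=2^9y^9\le y^4$ for small $y$, and $y'^{h_1ra}/2\ge y^{h_2ra}$ once $h_2$ is taken to be a large multiple of $h_1$ (using $y<c^2$ small, $ra\ge 2$), this gives (i) of the present lemma. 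If outcome (ii) occurs, we obtain $A_{t+1}:=A$ with $|A|\ge y'^{h_1ra}|F_t|$, anticomplete to $B$ with $|B|\ge(1-3y')|F_t|$, and we set $F_{t+1}=F[B]$. The new block $A_{t+1}$ lies inside $V(F_t)$, so it is anticomplete to the earlier blocks, and $F_{t+1}$ is anticomplete to $A_{t+1}$ and to all earlier ones.

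The main quantitative check is that the remainder stays at least $|F|/2$ throughout $k$ rounds: $(1-6y)^{k}\ge 1-6y\ceil{y^{-1/2}}\ge 1-12y^{1/2}\ge 1/2$ for $y<c^2$ with $c<1/16$ small. This is exactly why the length is only $y^{-1/2}$ rather than $y^{-1}$, and why we need $y<c^2$ rather than $y<c$: the sparsity parameter may double but must stay below $c$. After $k$ rounds without outcome (i), $(A_1,\ldots,A_k)$ is an anticomplete blockade of length at least $y^{-1/2}$ and width at least $y'^{h_1ra}|F|/2\ge y^{h_2ra}|F|$, which is (ii); in the complemented case this is a complete blockade. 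The only delicate points are the constant bookkeeping (choose $h_2=2h_1+10$, say) and remembering that the sparsity of $F_t$ is measured relative to $|F_t|$, which the bound $|F_t|\ge|F|/2$ handles.
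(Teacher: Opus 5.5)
Your proof is correct and follows the paper's route: reduce to the sparse case by complementation, then apply Lemma~\ref{lem:sparse-pair} about $\ceil{y^{-1/2}}$ times inside a shrinking remainder, peeling off pairwise anticomplete blocks. The one difference is the parameter you feed to Lemma~\ref{lem:sparse-pair}. The paper uses $z=y^{1/2}$, which stays valid while the remainder is at least a constant fraction $c'|F|$ and gives $z^9=y^{9/2}\le y^4$. Your choice $2y$ gives the stronger $(2y)^9$-restricted outcome and would allow $\Theta(y^{-1})$ rounds, so your remark that it forces length $y^{-1/2}$ and $y<c^2$ really describes the paper's choice, not yours. Also, $1-12y^{1/2}\ge 1/2$ needs $c\le 1/24$; alternatively use the sharper bound $\ceil{y^{-1/2}}\le \tfrac{17}{16}y^{-1/2}$.
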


\begin{proof}
Let $z=y^{1/2}$.  
Since $F$ is $y$-restricted, by replacing $F$ with $\overline F$, we may assume that $F$ is $y$-sparse.
The graph $F$ is also $z$-sparse.

Starting with
$R_0=V(F)$, we construct sets $R_j$ inductively. Suppose that $R_j$ has
already been constructed until outcome (i) occurs. We apply
Lemma~\ref{lem:sparse-pair} to the induced graph $F[R_j]$, with $z$ in place
of $y$ in Lemma~\ref{lem:sparse-pair}.
Then we check the hypothesis of Lemma~\ref{lem:sparse-pair}. First,
$F[R_j]\in\mathcal C_r$, since $\mathcal C_r$ is hereditary. Secondly,
$z<c$, as $y<c^2$. Thirdly, $F[R_j]$ is $z$-sparse. Indeed, by construction
we have
$|R_j|\ge (1-3z)^j|F|.$
Since $j<z^{-1}$, after decreasing $c$ if necessary, there is an absolute
constant $c'>0$ such that
$
        |R_j|\ge c'|F|.
$
We also assume $c\le c'$. Hence, since $z<c$, we have $z\le c'$, and so
\[
        \Delta(F[R_j])
        \le \Delta(F)
        \le y|F|
        = z^2|F|
        \le z|R_j|.
\]
Thus $F[R_j]$ is $z$-sparse.
Let $h_1$ be the constant from Lemma~\ref{lem:sparse-pair}, then by Lemma~\ref{lem:sparse-pair}, one of the following two outcomes holds:
\begin{enumerate}[(i)]
\item there is a $z^9$-restricted induced subgraph $S\subseteq R_j$
such that
$
        |S|\ge z^{h_1ra}|R_j|;
$

\item there is an anticomplete pair $(A_j,R_{j+1})$ in $F[R_j]$
such that
$
        |A_j|\ge z^{h_1ra}|R_j|
$ and $
        |R_{j+1}|\ge (1-3z)|R_j|.
$
\end{enumerate}

If Lemma~\ref{lem:sparse-pair} (i) holds, then $S$ is $y^4$-restricted, since
$z^9=y^{9/2}\le y^4$. Choose $h_2$ large enough so that, for all $0<y<c^2$, $r\ge1$ and $a\ge2$,
$
        c' y^{h_1ra/2}\ge y^{h_2ra}.
$
Then 
$$
|S|\ge z^{h_1ra}|R_j|
    \ge c'z^{h_1ra}|F|
    \ge y^{h_2ra}|F|,
$$
and the 
outcome (i)  holds.

If Lemma~\ref{lem:sparse-pair} (ii) holds, set $R_{j+1}$ to be the second set in the
anticomplete pair $(A_j,R_{j+1})$, and continue inside $R_{j+1}$.
Each extracted block $A_j$ has size at least
$
        c' z^{h_1ra}|F|\ge y^{h_2ra}|F|
$
for a suitable constant $h_2$.  Moreover, for $i<j$, we have
$A_j\subseteq R_{i+1}$, and $A_i$ is anticomplete to $R_{i+1}$. Hence
$(A_0,A_1,\dots,A_{\ceil{y^{-1/2}}-1})$ is an anticomplete blockade in $F$ or $\overline{F}$. This gives a
complete or anticomplete blockade in $F$ of length at least $y^{-1/2}$.
So the outcome (ii) holds. 
This proves the lemma.
\end{proof}

\subsection{Iterative sparsification}

We derive a variant of the minimal-parameter lemma used in the iterative sparsification method due to Nguyen, Scott and Seymour \cite[Lemma 3.2]{NguyenScottSeymour2025}.  We first derive the initial step for the global alternative.

\begin{lemma}\label{lem:constant-start}
Let $c$ be the constant given by Lemma~\ref{lem:sparse-pair}, and let
$b$ be the constant given by Lemma~\ref{lem:pure-blockade}. There are
constants $x\in(0,c^2)$ and $d\ge 1$ such that the following
holds. For every $r\ge 1$ and every graph $G\in\mathcal C_r$, the
graph $G$ contains an $x$-restricted induced subgraph with at least
$x^{dr}|G|$ vertices.
\end{lemma}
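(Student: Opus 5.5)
The plan is to get an $x$-restricted induced subgraph directly from a pure blockade, using only Lemma~\ref{lem:pure-blockade} and Lemma~\ref{lem:pattern-lift} together with Ramsey's theorem on the pattern graph. No induction hypothesis is needed: $x$ is a fixed constant, so the pattern graph only has to contain a homogeneous set of constant size.

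First fix the constants. Let $q=\ceil{2/x}$ and $\eps=x/2$, where $x\in(0,c^2)$ is a small absolute constant chosen below. Then $\eps+q^{-1}\le x$. By Ramsey's theorem, every graph on at least $4^{q}$ vertices has a clique or stable set of size $q$. So we need the pattern graph to have at least $4^q$ vertices, i.e.\ $\eps^{-1}\ge 4^{q}$, which fails with $\eps=x/2$. To fix this, decouple the two parameters: take $\eps=\eps(x)$ to be a constant with $\eps\le x/2$ and $\eps^{-1}\ge 4^{\ceil{2/x}}$, for instance $\eps=4^{-3/x}$. This is still an absolute constant once $x$ is fixed; any fixed $x<\min\{c^2,1/4\}$ works.

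Next, split on the size of $G$. If $|G|<\eps^{-br}$, then a single vertex is an $x$-restricted induced subgraph, and $1\ge \eps^{br}|G|$. It therefore suffices that $\eps^{br}\ge x^{dr}$, i.e.\ $d\ge b\log(1/\eps)/\log(1/x)$. With $\eps=4^{-3/x}$ this gives $d\approx 6b/(x\log(1/x))$, which is a constant. Otherwise $|G|\ge\eps^{-br}$, and Lemma~\ref{lem:pure-blockade} gives an $\eps$-pure blockade of length $\ell\ge\eps^{-1}\ge 4^q$ with equal block sizes $m\ge\eps^{br}|G|$.

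Now finish in the large case. The pattern graph $J$ has $\ell\ge 4^q$ vertices, so by Ramsey it has a homogeneous set $I$ with $|I|=q$. Lemma~\ref{lem:pattern-lift} then shows that $G[\bigcup_{i\in I}B_i]$ is $(\eps+q^{-1})$-restricted, hence $x$-restricted, on $qm\ge \eps^{br}|G|\ge x^{dr}|G|$ vertices.

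The only delicate point is the bookkeeping of constants. We need $\eps$ to depend only on $x$ (not on $r$), and $d$ large enough that $x^{dr}\le\eps^{br}$ for all $r\ge 1$. Both hold because the exponents are linear in $r$ on both sides. Note that $d$ here is a new constant name, unrelated to the VC-dimension.
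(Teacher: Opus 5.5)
Your proposal is correct and follows essentially the same route as the paper. You choose $q\approx 1/x$ and a blockade parameter $\eps$ (the paper's $\tau$) with $\eps+q^{-1}\le x$ and $\eps^{-1}\ge R(q,q)$, handle small $G$ trivially, then apply Lemma~\ref{lem:pure-blockade}, Ramsey on the pattern graph, and Lemma~\ref{lem:pattern-lift}, and absorb $\eps^{br}$ into $x^{dr}$ by enlarging $d$; your explicit choices $\eps=4^{-3/x}$ and $R(q,q)\le 4^q$ simply make the paper's implicit constant choices concrete.
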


\begin{proof}
Choose a sufficiently small constant $x\in(0,c^2)$ and let
$
        q=\ceil{4/x}.
$
Choose a constant $\tau\in(0,1/2)$ such that
\[
        \tau+1/q\le x
        \qquad\text{and}\qquad
        \tau^{-1}\ge R(q,q),
\]
where $R(q,q)$ is the ordinary two-color Ramsey number.

Let $G\in\mathcal C_r$; we may assume that $|G|\ge\tau^{-br}$; for otherwise the result holds.  By Lemma~\ref{lem:pure-blockade} applied with $\eps=\tau$, the graph $G$ contains a $\tau$-pure blockade of length at least $\tau^{-1}$ and width at least $\tau^{br}|G|$.  
Let $J$ be its pattern graph.
Since the pattern graph has at least $R(q,q)$ vertices, it contains a clique or stable set of size $q$.  
By Lemma~\ref{lem:pattern-lift}, we obtain a $(\tau+q^{-1})$-restricted
induced subgraph. Since $\tau+q^{-1}\le x$, 
we can   
enlarge $d$ if necessary and we have
$
        \tau^{br}|G|\ge x^{d r}|G|.
$
This proves the lemma.
\end{proof}

We shall use the following lemma as an iteration lemma.  Suppose that,
for each $y$, every $y$-restricted induced subgraph contains a
$y^p$-restricted induced subgraph of size at least a fixed power of
$y$ times its order.  Then this step can be repeated, and hence gives
the desired conclusion for any smaller parameter.

\begin{lemma}\label{lem:iterate-fixed-power}
Let $x\in(0,1)$, $p\ge 2$, $t\ge 1$, and $\gamma\in(0,x)$.
Let $G$ be a graph satisfying:
\begin{enumerate}[(i)]
\item $G$ has an $x$-restricted induced subgraph on at least
$x^t|G|$ vertices; and
\item for every $y\in[\gamma,x]$ and every $y$-restricted induced
subgraph $F$ of $G$ with $|F|\ge y^{2t}|G|$, the graph $F$ contains a
$y^p$-restricted induced subgraph on at least $y^{pt}|F|$ vertices.
\end{enumerate}
Then $G$ has a $\gamma$-restricted induced subgraph on at least
$\gamma^{2pt}|G|$ vertices.
\end{lemma}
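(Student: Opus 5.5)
We will iterate hypothesis (ii), squaring the exponent $p$ on the restriction parameter at each step. The plan is to track how much of $G$ survives after each round and to stop as soon as the parameter drops below $\gamma$.

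\emph{Starting point.} By (i) we have $F_0$, which is $y_0$-restricted with $y_0=x$ and $|F_0|\ge x^t|G|\ge y_0^{2t}|G|$.

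\emph{Inductive invariant.} We construct $F_0\supseteq F_1\supseteq\cdots$ with $F_k$ being $y_k$-restricted, where $y_k=x^{p^k}$. The invariant to maintain is
\[
|F_k|\ge y_k^{t(1+1/p+\cdots+1/p^{k})}\,|G|.
\]
Since $p\ge2$, the sum is less than $2$, so this gives $|F_k|\ge y_k^{2t}|G|$. Hence, as long as $y_k\ge\gamma$, hypothesis (ii) applies to $F_k$. It yields a $y_k^p=y_{k+1}$-restricted induced subgraph $F_{k+1}$ with $|F_{k+1}|\ge y_k^{pt}|F_k|=y_{k+1}^{t}|F_k|$. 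To close the induction, note that $y_k=y_{k+1}^{1/p}$, so
\[
y_k^{t(1+\cdots+p^{-k})}=y_{k+1}^{t(p^{-1}+\cdots+p^{-k-1})}.
\]
Multiplying by $y_{k+1}^t$ gives exactly the exponent $t(1+1/p+\cdots+1/p^{k+1})$.

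\emph{Stopping.} Let $k$ be the first index with $y_{k+1}<\gamma$; such $k$ exists because $y_k\to0$, and $y_k\ge\gamma$ still holds. Then $F_{k+1}$ is $y_{k+1}$-restricted, hence $\gamma$-restricted, since being $y$-restricted is monotone in $y$. For its size, $y_{k+1}=y_k^p\ge\gamma^p$, and therefore
\[
|F_{k+1}|\ge y_{k+1}^{2t}|G|\ge\gamma^{2pt}|G|.
\]
This is the claimed bound.

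The main thing to get right is the size bookkeeping: the loss factors must compound as a geometric series in the exponent, so that the total stays below $y^{2t}$ and the size hypothesis of (ii) can be reapplied at every step. The choice $p\ge2$ is exactly what keeps the series sum below $2$. The overshoot past $\gamma$ costs at most one further power of $p$, which accounts for the final exponent $2pt$.
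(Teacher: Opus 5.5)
Your proof is correct and is essentially the paper's argument. The paper packages the same iteration as an extremal choice: it takes the minimal $y\in[\gamma^p,x]$ for which $G$ has a $y$-restricted induced subgraph on at least $y^{2t}|G|$ vertices, and notes that applying (ii) when $y\ge\gamma$ would give the smaller admissible value $y^p$. You instead run the iteration explicitly along $y_k=x^{p^k}$, which also spares you from justifying that such a minimum exists. Your geometric-series bookkeeping is valid but not needed, because the invariant $|F|\ge y^{2t}|G|$ already reproduces itself: $y^{pt}\cdot y^{2t}=y^{(p+2)t}\ge y^{2pt}=(y^p)^{2t}$, using $p\ge 2$ and $y<1$.
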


\begin{proof}
Choose $y\in[\gamma^p,x]$ minimal such that $G$ has a $y$-restricted
induced subgraph $F$ with
$
        |F|\ge y^{2t}|G|.
$
Such a $y$ exists by (i), since $x^t|G|\ge x^{2t}|G|$.

If $y\ge \gamma$, then by (ii), applied to $F$, the graph $F$ contains a
$y^p$-restricted induced subgraph $F'$ with
\[
        |F'|\ge y^{pt}|F|
        \ge y^{(p+2)t}|G|
        \ge y^{2pt}|G|
        =(y^p)^{2t}|G|.
\]
This contradicts the minimality of $y$, since $y^p<y$ and
$y^p\ge \gamma^p$.

Hence $\gamma^p\le y<\gamma$. But then $F$ is $\gamma$-restricted and
$
        |F|\ge y^{2t}|G|\ge \gamma^{2pt}|G|.
$
This proves the lemma.
\end{proof}

We now combine the preceding lemmas. 
Lemma~\ref{lem:constant-start} then
provides the initial restricted subgraph.
If no large complete or anticomplete
blockade exists, then Lemma~\ref{lem:pair-to-blockade} gives the
$y$-to-$y^4$ improvement needed in Lemma~\ref{lem:iterate-fixed-power}.
Iterating this yields a
restricted induced subgraph of the desired parameter.

\begin{lemma}\label{lem:global-alternative}
Let $c$ be the constant given by Lemma~\ref{lem:sparse-pair},
$x$ and $d$ be the constants given by Lemma~\ref{lem:constant-start}.
Let $r\ge 1$, $a\ge \max\{r,2\}$, and assume that every $n$-vertex
graph in $\mathcal C_{r-1}$ contains a clique or a stable set of size at
least $n^{1/a}$.
There exists $h_3\ge 1$ such that 
for every graph $G\in\mathcal C_r$ and every
$\eta\in(0,x)$, one of the following holds:
\begin{enumerate}[(i)]
\item $G$ contains an $\eta$-restricted induced subgraph with at least
$\eta^{h_3ra}|G|$ vertices;
\item there is a complete or anticomplete blockade in $G$ of length
$k\in[2,\eta^{-1}]$ and width at least $|G|/k^{h_3ra}$.
\end{enumerate}
\end{lemma}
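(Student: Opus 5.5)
We combine Lemma~\ref{lem:constant-start}, Lemma~\ref{lem:pair-to-blockade} and Lemma~\ref{lem:iterate-fixed-power}, with $p=4$ and $t=dr$ (here $d$ is the constant of Lemma~\ref{lem:constant-start}, not the VC-dimension). The argument is by contradiction: assume outcome (ii) fails for $G$, and show that outcome (i) holds by verifying the two hypotheses of Lemma~\ref{lem:iterate-fixed-power} with $\gamma=\eta$.

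Hypothesis (i) of Lemma~\ref{lem:iterate-fixed-power} is Lemma~\ref{lem:constant-start}: $G$ has an $x$-restricted induced subgraph on at least $x^{dr}|G|=x^t|G|$ vertices.

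For hypothesis (ii), fix $y\in[\eta,x]$ and a $y$-restricted induced subgraph $F$ of $G$ with $|F|\ge y^{2t}|G|$. Since $\mathcal C_r$ is hereditary, $F\in\mathcal C_r$, and $y\le x<c^2$, so Lemma~\ref{lem:pair-to-blockade} applies to $F$. We consider its two outcomes.
\begin{itemize}
\item In outcome (i), $F$ contains a $y^4$-restricted induced subgraph on at least $y^{h_2ra}|F|$ vertices. We need $y^{h_2ra}\ge y^{pt}=y^{4dr}$. This does not hold as written, so we enlarge $t$ and take $t=\max(d,h_2)\,ra$ (using $a\ge 2$). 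Hypothesis (i) still holds because $x^{dr}\ge x^{t}$.
\item In outcome (ii), $F$, and hence $G$, contains a complete or anticomplete blockade of length at least $y^{-1/2}$ and width at least $y^{h_2ra}|F|\ge y^{h_2ra+2t}|G|$. Truncate it to length $k=\lceil y^{-1/2}\rceil$. Then $k\ge 2$ because $y<1/4$, and $k\le \eta^{-1}$ because $y\ge\eta$ and $\eta<1$ give $y^{-1/2}+1\le \eta^{-1}$ (shrinking $x$ if necessary). From $k\le 2y^{-1/2}$ we get $y\ge k^{-2}/4\ge k^{-4}$, so the width is at least $k^{-4(h_2ra+2t)}|G|$. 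Since $t=O(ra)$, this is at least $|G|/k^{h_3ra}$ once $h_3$ is a large enough absolute constant. This is outcome (ii) of the present lemma, which we assumed fails.
\end{itemize}
Hence hypothesis (ii) of Lemma~\ref{lem:iterate-fixed-power} holds for every $y\in[\eta,x]$.

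Lemma~\ref{lem:iterate-fixed-power} then gives an $\eta$-restricted induced subgraph on at least $\eta^{2pt}|G|=\eta^{8t}|G|\ge \eta^{h_3ra}|G|$ vertices, with $h_3\ge 8\max(d,h_2)$ together with the constant needed in the blockade case. This is outcome (i).

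The main obstacle is the exponent bookkeeping: $t$ must be taken to scale like $ra$ (not $r$) so that the step from $y$ to $y^4$ fits Lemma~\ref{lem:iterate-fixed-power}. We also need $h_3$ to be an absolute constant, independent of $r$ and $a$, so that the blockade width, which is measured in powers of $y$ relative to $|F|$ and hence to $|G|$, can be rewritten as a power of $k$.
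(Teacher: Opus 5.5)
Your proposal is correct and follows the paper's proof essentially step for step. You assume (ii) fails and apply Lemma~\ref{lem:iterate-fixed-power} with $p=4$ and $t=\Theta(ra)$: the paper takes $t=10h_2ra+dr$, you take $t=\max(d,h_2)ra$. Hypothesis (i) comes from Lemma~\ref{lem:constant-start} and hypothesis (ii) from Lemma~\ref{lem:pair-to-blockade}, and you convert the blockade width into a power of $k$ using $y\ge k^{-2}$.

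There are two minor slips, and neither affects the argument:
\begin{itemize}
\item The bound $y\ge k^{-2}$ comes from $k=\lceil y^{-1/2}\rceil\ge y^{-1/2}$, not from $k\le 2y^{-1/2}$, which gives the reverse inequality $y\le 4k^{-2}$.
\item No shrinking of $x$ is needed for $k\le\eta^{-1}$, since $x<c^2<1/256$ already holds.
\end{itemize}
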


\begin{proof}
Let $h_2$ be the constant given by
Lemma~\ref{lem:pair-to-blockade},
and $u=8(10h_2ra+dr)$. 
Choose $h_3$ large enough such that
$u\le h_3ra$.
We will use Lemma~\ref{lem:iterate-fixed-power} to  prove this lemma.

Suppose that outcome (ii) does not hold. We apply Lemma~\ref{lem:iterate-fixed-power} with
$p=4$, $t=u/8$. 
We first verify the hypothesis of Lemma~\ref{lem:iterate-fixed-power} (ii).
Let $y\in[\eta,x]$ and $F$ be a $y$-restricted induced
subgraph of $G$ such that $|F|\ge y^{u/4}|G|$. 
Since $x<c^2$, we apply Lemma~\ref{lem:pair-to-blockade} to $F$. Hence one
of the following holds:
\begin{enumerate}[(i)]
\item $F$ contains a $y^4$-restricted induced subgraph with at least
$y^{h_2ra}|F|$ vertices;

\item there is a complete or anticomplete blockade in $F$ of length at
least $y^{-1/2}$ and width at least $y^{h_2ra}|F|$.
\end{enumerate}

If Lemma~\ref{lem:pair-to-blockade} (i) holds, then it satisfies the hypothesis of Lemma~\ref{lem:iterate-fixed-power} (ii).
Indeed, since $u/2\ge h_2ra$ and
$0<y<1$, the induced subgraph found in Lemma~\ref{lem:pair-to-blockade} (i) has size at least
$        y^{h_2ra}|F|\ge y^{u/2}|F|.
$
If Lemma~\ref{lem:pair-to-blockade} (ii) holds,
choose a subblockade of length $k=\ceil{y^{-1/2}}$. Since
$y\le x<c^2<1/16$, we have $k\ge 2$; and since $y\ge\eta$, we have
$
k\le 2y^{-1/2}\le 2\eta^{-1/2}\le \eta^{-1}.
$
The width of this blockade is at least
$
y^{h_2ra}|F|\ge y^{h_2ra+u/4}|G|.
$
Since $u/2\ge h_2ra+u/4$ and $0<y<1$, this is at least $y^{u/2}|G|$.
Moreover, $k\ge y^{-1/2}$, and so
$
|G|/k^u\le y^{u/2}|G|.
$
Thus the blockade has width at least $|G|/k^u$, and hence at least
$|G|/k^{h_3ra}$. This gives outcome (ii), a contradiction.

It remains to verify the hypothesis of Lemma~\ref{lem:iterate-fixed-power} (i).
By
Lemma~\ref{lem:constant-start}, the graph $G$ contains an $x$-restricted
induced subgraph with at least $x^{dr}|G|$ vertices. Since $u/8\ge dr$, this
size is at least $x^{u/8}|G|$. 
By applying Lemma~\ref{lem:iterate-fixed-power} with $p=4$ and $t=u/8$, there
exists an $\eta$-restricted induced subgraph with at least
$
\eta^u|G|\ge \eta^{h_3ra}|G|
$
vertices, and outcome (i) holds.
This proves the lemma.
\end{proof}

\subsection{Large homogeneous set in restricted induced subgraph or complete or anticomplete blockade}

It remains to derive an Erd\H{o}s--Hajnal bound.
The next lemma is the standard cograph induction: A restricted subgraph
gives a large homogeneous set directly, while a complete or anticomplete
blockade allows cographs found inside the blocks to be joined together.

\begin{lemma}\label{lem:linear-conversion}
Let $\mathcal C$ be a hereditary graph class. Suppose there exist constants
$\gamma\in(0,1/16)$ and $u\ge 1$ such that, for every $G\in\mathcal C$ and
every $\eta\in(0,\gamma)$, one of the following holds:
\begin{enumerate}[(i)]
\item $G$ has an $\eta$-restricted induced subgraph on at least
$\eta^u|G|$ vertices;
\item $G$ has a complete or anticomplete blockade of length
$k\in[2,\eta^{-1}]$ and width at least $|G|/k^u$.
\end{enumerate}
Then every $n$-vertex graph in $\mathcal C$ contains a clique or stable set
of size at least
$$
n^{1/(h_4u)},
$$
where $h_4$ is a constant depending only on $\gamma$.
\end{lemma}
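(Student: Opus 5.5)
We prove the stronger, induction-friendly statement that every $G\in\mathcal C$ contains a cograph (an induced subgraph with no induced $P_4$) on at least $|G|^{1/(h_4u)}$ vertices. Since every cograph $H$ satisfies $\omega(H)\alpha(H)\ge |H|$, this gives a homogeneous set of size at least $|G|^{1/(2h_4u)}$; the factor $2$ is absorbed into $h_4$. Set $\beta=1/(h_4u)$. We argue by induction on $n=|G|$, with $h_4$ large in terms of $\gamma$ only, so that $n^{\beta}\le 2$ whenever $n$ is below a threshold $n_0(\gamma,u)$. The base case is then trivial, since any two vertices form a cograph. The key choice is to apply the hypothesis with a single well-chosen scale $\eta$, namely $\eta=n^{-1/(2u)}$, or more precisely $\eta=n^{-\theta}$ with $\theta\approx 1/(2u)$. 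This is below $\gamma$ once $n\ge n_0$.

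In outcome (i) there is an $\eta$-restricted induced subgraph $F$ with $|F|\ge \eta^u n= n^{1/2}$. Passing to the complement if necessary, $F$ has maximum degree at most $\eta|F|$. A greedy stable set then has size at least $|F|/(\eta|F|+1)\ge \tfrac12\eta^{-1}= \tfrac12 n^{1/(2u)}$. This is at least $n^{\beta}$ provided $h_4\ge 4$ and $n$ is large. A stable set or clique is a cograph, so this case is done.

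In outcome (ii) there is a complete or anticomplete blockade $(B_1,\dots,B_k)$ with $2\le k\le \eta^{-1}$ and each $|B_i|\ge n/k^u$. By induction, since $\mathcal C$ is hereditary and $|B_i|<n$, each $G[B_i]$ contains a cograph $H_i$ with $|H_i|\ge (n/k^u)^{\beta}$. The disjoint union (anticomplete case) or join (complete case) of the $H_i$ is again an induced cograph of $G$. Its size is at least
\[
k\,(n k^{-u})^{\beta}=n^{\beta}\,k^{1-u\beta}=n^{\beta}k^{1-1/h_4}\ge n^{\beta},
\]
because $k\ge2$ and $h_4\ge1$.

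The only delicate point is the calibration between the two outcomes. The exponent $\beta$ must be large enough for the greedy bound $\eta^{-1}$ in (i) to beat $n^{\beta}$, while case (ii) needs $u\beta\le 1$, which holds automatically. Choosing $\eta$ as a fixed power of $n$ satisfies both conditions. The requirement $\eta<\gamma$ forces $n\ge\gamma^{-2u}$, so the base-case threshold is $n_0=\gamma^{-2u}$. We need $n_0^{\beta}\le 2$, which means $2/(h_4\log(1/\gamma))\cdot\ldots$ stays bounded, so $h_4$ of order $\log(1/\gamma)$ suffices. Thus $h_4$ depends only on $\gamma$, as required. I expect this bookkeeping of thresholds, rather than any structural step, to be the main point to check carefully.
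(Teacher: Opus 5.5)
Your proposal is correct and is essentially the paper's proof. It strengthens the statement to induced cographs and inducts on $n$, with the base threshold $\gamma^{-2u}$ and the scale $\eta=n^{-1/(2u)}$; outcome (i) is handled by Tur\'an's bound, outcome (ii) by gluing the inductively found cographs across the blockade, and the conclusion follows from $\omega(H)\alpha(H)\ge |H|$ for cographs, with $h_4$ of order $\log(1/\gamma)$ (the paper takes $h_4=16\log_2(1/\gamma)$).
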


\begin{proof}
Let $\ell=\log_2(1/\gamma)$; recall
that a cograph is a graph with no induced $P_4$, and that the complete or
anticomplete join of cographs is again a cograph.

\begin{claim}\label{claim:large-cograph}
Every graph $G\in\mathcal C$ with $|G|=n$ contains an induced cograph on at
least $n^{1/(8\ell u)}$ vertices.
\end{claim}
{\renewcommand{\qedsymbol}{$\blacksquare$}
\begin{proof}[Proof of the Claim \ref{claim:large-cograph}]
We prove the claim by induction on $n$.
If $n\le \gamma^{-2u}=2^{2\ell u}$, then the result holds.
Thus we may assume
$n>\gamma^{-2u}$ and set
$
\eta=n^{-1/(2u)}.
$
Then $\eta<\gamma$, so the assumption of the lemma applies.

If (i) holds, let $S$ be an $\eta$-restricted induced subgraph with
$
|S|\ge \eta^u n=n^{1/2}.
$
In $G[S]$ or in $\overline{G[S]}$, the maximum degree is at most
$\eta |S|$. By Turán's theorem, $G[S]$ has a clique or stable set of size
at least
$$
\frac{|S|}{\eta |S|+1}\ge \frac{1}{2\eta}
       =\frac12 n^{1/(2u)}.
$$
Since $n>2^{2\ell u}$ and $\ell\ge 4$, this is at least
$n^{1/(8\ell u)}$. A clique or stable set is a cograph, so the induction
step is complete in this case.

If (ii) holds, let $(B_1,\ldots,B_k)$ be the complete or
anticomplete blockade. By induction applied inside each block, $G[B_i]$
contains an induced cograph of size at least $|B_i|^{1/(8\ell u)}$. Combining
these cographs across the complete or anticomplete blockade gives an
induced cograph of size at least
$$
k\left(\frac n{k^u}\right)^{1/(8\ell u)}
        =k^{1-1/(8\ell)}n^{1/(8\ell u)}
        \ge n^{1/(8\ell u)},
$$
since $1/(8\ell)<1$ and $k\ge 2$.
This proves the claim.
\end{proof}
}
By the claim, every $G\in\mathcal C$ contains an induced cograph $Q$ with
$
        |Q|\ge |G|^{1/(8\ell u)}.
$
Every $n$-vertex cograph has a clique or
stable set of size at least $n^{1/2}$. Therefore every $G\in\mathcal C$
satisfies
$$
\max\{\omega(G),\alpha(G)\}\ge |G|^{1/(16\ell u)}.
$$
This proves the lemma with $h_4=16\ell$.
\end{proof}

\section{Proof of the main theorem}\label{sec:main}

We are now ready to prove the main theorem by induction on $r$.  For graphs in $\mathcal C_r$, Lemma~\ref{lem:global-alternative} gives two alternatives of their structures: either a large restricted induced subgraph, or a large complete or anticomplete blockade.
Together with the preceding conversion lemma, this gives a reciprocal Erd\H{o}s--Hajnal exponent for $\mathcal C_r$ in terms of the corresponding reciprocal exponent for $\mathcal C_{r-1}$.

\begin{theorem}\label{thm:Cr}
There exists $h\ge 1$ such that, for every integer
$r\ge 1$ and every graph $G\in\mathcal C_r$,
$$
\max\{\omega(G),\alpha(G)\}\ge |G|^{(hr)^{-r}}.
$$
\end{theorem}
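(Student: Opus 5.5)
We argue by induction on $r$, tracking the reciprocal exponent $s_r$: the least number such that every graph in $\mathcal C_r$ has a clique or stable set of size at least $|G|^{1/s_r}$. The goal is the recursion $s_r\le C r s_{r-1}$ for an absolute constant $C$. Unrolling it from a bounded base value gives $s_r\le C^r r!\le (Cr)^r$, and hence the stated bound with $h=C$ (enlarged if necessary).

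\emph{Base case.} For the base of the recursion we use $\mathcal C_0$. A graph in $\mathcal C_0$ has no bi-induced copy of $U_1$. This bigraph has one vertex on one side and two vertices on the other, namely $\emptyset$ and $\{1\}$. So whenever $|G|\ge 3$, no vertex $v$ has both a neighbour and a non-neighbour among the vertices other than itself. Hence $G$ is complete or edgeless, and $\max\{\omega,\alpha\}=|G|$, so $s_0=1$. Alternatively one can start at $r=1$ and bound $s_1$ directly by the same argument with $a=2$.

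\emph{Inductive step.} Fix $r\ge 1$ and set $a=\max\{s_{r-1},r,2\}$. Since $s_{r-1}\ge 1$, the quantity $a$ is at most a constant multiple of $r s_{r-1}$ once the recursion is in place. The induction hypothesis is then exactly the assumption of Lemma~\ref{lem:global-alternative}. That lemma yields, for every $G\in\mathcal C_r$ and every $\eta\in(0,x)$, one of two outcomes: an $\eta$-restricted induced subgraph on at least $\eta^{h_3ra}|G|$ vertices, or a complete or anticomplete blockade of length $k\in[2,\eta^{-1}]$ and width at least $|G|/k^{h_3ra}$. This is precisely the hypothesis of Lemma~\ref{lem:linear-conversion}, applied with the hereditary class $\mathcal C_r$, with $\gamma=x$ (note $x<c^2<1/16$), and with $u=h_3ra$. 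That lemma gives
\[
s_r\le h_4h_3 r a\le h_4h_3 r\max\{s_{r-1},r,2\}.
\]

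\emph{Closing the recursion.} It remains to check that the $\max$ does not spoil the recursion. We prove by induction the explicit statement $s_r\le (hr)^r$ with $h\ge 2h_4h_3$ (and $h\ge 2$). For $r\ge 2$ we have $(h(r-1))^{r-1}\ge r$, so $a\le (h(r-1))^{r-1}$. Then
\[
s_r\le h_4h_3 r\,(h(r-1))^{r-1}\le (hr)^r,
\]
where the last step uses $h_4h_3\le h$. For $r=1$ we get $a=2$, and $s_1\le 2h_4h_3\le h$.

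\emph{Where the care is needed.} The main point to verify is that the constants $h_3$ and $h_4$ really are absolute, independent of $r$ and $a$. The constant $h_3$ comes out of Lemmas~\ref{lem:sparse-pair}, \ref{lem:pair-to-blockade} and \ref{lem:constant-start}. Those lemmas choose $c$ depending on the condition $2^{r+1}\eps<1$ with $\eps=y^{12a}$. Since $a\ge r$, we have $y^{12a}\le c^{12r}$, so a fixed small $c$ suffices uniformly in $r$; I would make this uniformity explicit. The constant $h_4$ depends only on $\gamma=x$, which is absolute.
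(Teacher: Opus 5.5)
Your proposal is correct and follows the paper's proof: induction on $r$ from $s_0=1$, with Lemma~\ref{lem:global-alternative} feeding Lemma~\ref{lem:linear-conversion} at $a=\max\{s_{r-1},r,2\}$ and $\gamma=x$ to give $s_r\le h_3h_4r\max\{s_{r-1},r,2\}$. Your explicit induction $s_r\le (hr)^r$ with $h\ge\max\{2,2h_3h_4\}$ absorbs the $\max$ more cleanly than the paper's ``replace $s_{r-1}$ by a larger value'' step, and your remark that $a\ge r$ makes the choice of $c$ uniform in $r$ fills in a point the paper leaves implicit.
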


\begin{proof}
Let $c$ be the constant from Lemma~\ref{lem:sparse-pair},
and let $x\in(0,c^2)$ and $d\ge 1$ be the constants given by
Lemma~\ref{lem:constant-start}. Let $h_3$ be the constant given by
Lemma~\ref{lem:global-alternative} and let $h_4$ be the constant given by
Lemma~\ref{lem:linear-conversion}, applied with $\gamma=x$.

For each $r\ge 0$, choose $s_r\ge 1$ such that every graph $G\in\mathcal C_r$ satisfies $$ \max\{\omega(G),\alpha(G)\}\ge |G|^{1/s_r}. $$ We take $s_0=1$; indeed, if a graph has no bi-induced copy of $U_1$, then no vertex has both a neighbour and a non-neighbour, and the graph is
complete or empty.

Assume $r\ge 1$ and that $s_{r-1}$ has been defined. Let
$a=\max\{s_{r-1},r,2\}$ and $u=h_3ra$.
Then by
Lemma~\ref{lem:global-alternative}, for every $\eta\in(0,x)$ and every
graph $G\in\mathcal C_r$, one of the following holds:
\begin{enumerate}[(i)]
\item $G$ contains an $\eta$-restricted induced subgraph with at least
$\eta^u|G|$ vertices;

\item $G$ contains a complete or anticomplete blockade of length
$k\in[2,\eta^{-1}]$ and width at least $|G|/k^u$.
\end{enumerate}

By applying Lemma~\ref{lem:linear-conversion} with $\gamma=x$ to the hereditary
class $\mathcal C_r$, we have
$$
s_r\le h_4u\le h_3h_4r\max\{s_{r-1},r,2\}.
$$

Replacing $s_{r-1}$
by a larger value if necessary,
we may assume that $s_{r-1}\ge r$, for
all $r\ge 2$.
Then
$
s_r\le h_3h_4rs_{r-1}.
$
Since $s_0=1$, there exists a constant $h$ such that
$
s_r\le (h_3h_4)^r r!\le (hr)^r.
$
 Therefore every $G\in\mathcal C_r$
satisfies
$$
\max\{\omega(G),\alpha(G)\}\ge |G|^{1/s_r}\ge |G|^{(hr)^{-r}}.
$$
This proves the theorem.
\end{proof}

It remains to translate the result for the classes $\mathcal C_r$ back to
graphs of bounded VC-dimension.  This is immediate from the definition of
the universal bigraphs.

\begin{proof}[Proof of Theorem~\ref{thm:main}]
If $\VC(G)\le d$, then $G$ cannot contain a bi-induced copy of $U_{d+1}$, since one side of such a copy would be shattered by the other. Hence $G\in\mathcal C_d$. 
By Theorem~\ref{thm:Cr} with $r=d$,
\[
        \max\{\omega(G),\alpha(G)\}\ge |G|^{(hd)^{-d}}.
\]
This proves Theorem~\ref{thm:main}.
\end{proof}

\section{Applications}\label{sec:applications}

We collect several consequences of Theorem~\ref{thm:main}. Throughout this section, let $h$ denote a positive constant that may change from one statement or proof to another, and which is not necessarily the same as the constant in Theorem~\ref{thm:main}. Unless a dependence on fixed parameters is explicitly indicated, $h$ is absolute; we may increase $h$ without further comment. For $q\ge 1$, we write $\mathrm{twr}_1(x)=x$ and $\mathrm{twr}_{q+1}(x)=2^{\mathrm{twr}_q(x)}$. We also write $\log^{(0)} n=n$ and $\log^{(q+1)} n=\log(\log^{(q)} n)$ for the iterated logarithms.

For the hypergraph Ramsey applications, we use the following VC-dimension convention. If $H$ is a $k$-uniform hypergraph on a vertex set $V$ and $A\in\binom{V}{k-1}$, let
$
        N_H(A)=\{v\in V\setminus A:A\cup\{v\}\in E(H)\}.
$
We define $\VC(H)$ to be the VC-dimension of the family $\{N_H(A):A\in\binom{V}{k-1}\}.$ 
For $k=2$, this is exactly the usual VC-dimension of the neighbourhood family of a graph. If $\chi:\binom{[N]}{k}\to[r]$ is an $r$-coloring, we write $H_i$ for the $k$-uniform hypergraph formed by the edges of color $i$. We call such a coloring \emph{admissible} if $\VC(H_i)\le d$ for $i=1,\ldots,r-1$; no VC-dimension assumption is imposed on the last color.


For $k\ge2$, $r\ge2$ and $d\ge1$, let $R_{k,r}^d(t)$ be the smallest integer $N$ such that every admissible $r$-coloring of $\binom{[N]}{k}$ contains a monochromatic set of size $t$. Equivalently, let $f_{k,r}^d(n)$ be the largest integer $t$ such that every admissible $r$-coloring of $\binom{[n]}{k}$ contains a monochromatic set of size $t$. In the two-color case we abbreviate $R_k^d(t)=R_{k,2}^d(t)$ and $f_k^d(n)=f_{k,2}^d(n)$. The Ramsey estimates below use the classical Erd\H{o}s--Rado stepping-up argument \cite{ErdosRado1952}, while the bounded VC-dimension applications and polynomial R\"odl consequences follow the reductions of Nguyen, Scott and Seymour \cite[Sections~4--6]{NguyenScottSeymour2025}. 
We shall use the following two standard estimates.
\begin{lemma}[Sauer--Shelah~\cite{VapnikChervonenkis1971,Sauer1972,Shelah1972}]\label{lem:sauer-shelah}
Let $\mathcal F$ be a family of subsets of a finite set $X$, and suppose that $\VC(\mathcal F)\le d$. Then, for every finite set $S\subseteq X$,
$
        |\{F\cap S:F\in\mathcal F\}| \le \sum_{j=0}^{d}\binom{|S|}{j}.
$
In particular,
\[
        |\{F\cap S:F\in\mathcal F\}|
        \le
        (|S|+1)^{\max\{1,d\}}.
\]
\end{lemma}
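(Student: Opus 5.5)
The plan is to prove the Sauer--Shelah bound by the standard shifting (compression) argument, or equivalently by induction on $|S|$. First, restrict to $S$: the trace family $\mathcal F|_S=\{F\cap S:F\in\mathcal F\}$ is a family of subsets of the finite set $S$. Any set shattered by $\mathcal F|_S$ is a subset of $S$ shattered by $\mathcal F$, so $\VC(\mathcal F|_S)\le d$. It therefore suffices to show that every family $\mathcal G\subseteq\mathcal P(S)$ with $\VC(\mathcal G)\le d$ satisfies $|\mathcal G|\le\sum_{j\le d}\binom{|S|}{j}$.

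For this I would prove the stronger statement (Pajor's lemma) that $|\mathcal G|$ is at most the number of subsets of $S$ shattered by $\mathcal G$. Since every shattered set has size at most $d$, this number is at most $\sum_{j=0}^d\binom{|S|}{j}$. The proof goes by induction on $|S|$. The base case $S=\emptyset$ is trivial: $\mathcal G$ is empty or equals $\{\emptyset\}$, and $\emptyset$ is shattered exactly when $\mathcal G\neq\emptyset$.

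For the inductive step, fix $x\in S$ and set $S'=S\setminus\{x\}$. Define
\[
\mathcal G_1=\{G\setminus\{x\}:G\in\mathcal G\},\qquad \mathcal G_2=\{G'\subseteq S': G'\in\mathcal G\text{ and }G'\cup\{x\}\in\mathcal G\}.
\]
Then $|\mathcal G|=|\mathcal G_1|+|\mathcal G_2|$. Every set shattered by $\mathcal G_1$ is shattered by $\mathcal G$. If $T$ is shattered by $\mathcal G_2$, then both $T$ and $T\cup\{x\}$ are shattered by $\mathcal G$. These two collections are disjoint because only the sets coming from $\mathcal G_2$ contain $x$. Applying induction to $\mathcal G_1$ and $\mathcal G_2$ on $S'$ then gives the claim. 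This disjointness bookkeeping is the only delicate point, and it is routine.

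For the ``in particular'' bound, write $n=|S|$ and $D=\max\{1,d\}$. Then
\[
\sum_{j=0}^{d}\binom{n}{j}\le\sum_{j=0}^{D}\frac{n^j}{j!}\le\sum_{j=0}^{D}\binom{D}{j}n^j=(n+1)^D,
\]
where the middle step uses $1/j!\le\binom{D}{j}$ for $0\le j\le D$. This inequality holds because $\binom{D}{j}j!=D(D-1)\cdots(D-j+1)\ge1$. The case $d=0$ is covered as well, since then the sum equals $1\le n+1$.
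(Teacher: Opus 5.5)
Your proof is correct and complete. The paper itself gives no proof of this lemma: it is quoted as a standard estimate, with citations to Vapnik--Chervonenkis, Sauer and Shelah, so there is no in-paper argument to match.

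\textbf{Comparison with the usual textbook proof.} The textbook proof of Sauer--Shelah uses the same split $|\mathcal G|=|\mathcal G_1|+|\mathcal G_2|$. It then inducts on $|S|$ and $d$ simultaneously, using $\VC(\mathcal G_1)\le d$, $\VC(\mathcal G_2)\le d-1$, and the Pascal recursion $\sum_{j\le d}\binom{n}{j}=\sum_{j\le d}\binom{n-1}{j}+\sum_{j\le d-1}\binom{n-1}{j}$. Your Pajor-type route instead proves the stronger statement that $|\mathcal G|$ is at most the number of sets shattered by $\mathcal G$. This removes the need to carry $d$ through the induction, and the binomial bound then follows by counting sets of size at most $d$. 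Both routes give the same bound.

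\textbf{A wording point in the inductive step.} The map you actually use is the injection $T\mapsto T\cup\{x\}$, sending sets shattered by $\mathcal G_2$ to sets shattered by $\mathcal G$ that contain $x$. Your remark that $T$ itself is shattered by $\mathcal G$ is true but plays no role. Such a $T$ is already counted among the sets shattered by $\mathcal G_1$, because $\mathcal G_2\subseteq\mathcal G_1$. So the disjointness you need is between the sets shattered by $\mathcal G_1$, none of which contain $x$, and the lifted sets $T\cup\{x\}$.

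\textbf{The remaining steps.} Your reduction to the trace family $\{F\cap S:F\in\mathcal F\}$ is correct. So is the bound $\sum_{j\le d}\binom{n}{j}\le\sum_{j\le D}\binom{D}{j}n^j=(n+1)^D$, including the degenerate cases $d=0$ and $S=\emptyset$.
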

\begin{lemma}[Warren-type sign-pattern bound~\cite{Warren1968}]\label{lem:sign-pattern}
There is an absolute constant $h$ such that the following holds. Let
$P_1,\ldots,P_s$ be real polynomials in $m$ variables, each of degree at most
$\Delta$. Then the number of weak sign patterns of the form
$
        (\mathbf 1_{P_1(x)\ge 0},\ldots,\mathbf 1_{P_s(x)\ge 0}),
$
as $x$ ranges over $\mathbb R^m$, is at most
$
        (h\Delta s)^m .
$
\end{lemma}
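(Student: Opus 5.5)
The statement to prove is the Warren-type sign-pattern bound, Lemma~\ref{lem:sign-pattern}. I plan to deduce it from Warren's theorem on strict sign conditions. Warren's theorem says the following: for polynomials $Q_1,\ldots,Q_N$ in $m$ variables of degree at most $\Delta$ with $N\ge m$, the number of connected components of $\mathbb R^m\setminus\bigcup_i\{Q_i=0\}$ is at most $(4e\Delta N/m)^m$.

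The only subtlety is that we count weak patterns $\mathbf 1_{P_i(x)\ge 0}$, not strict ones, so I must pass from weak to strict conditions by a perturbation.

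First, fix a finite set $X\subseteq\mathbb R^m$ containing one witness point for each realized weak pattern. Choose $\delta>0$ smaller than $|P_i(x)|$ for every pair $(i,x)$ with $x\in X$ and $P_i(x)\ne0$. Set $Q_i=P_i+\delta/2$. For $x\in X$ we then have:
\begin{itemize}
\item $Q_i(x)>0$ if and only if $P_i(x)\ge 0$;
\item $Q_i(x)<0$ otherwise;
\item in particular, $Q_i(x)\ne 0$ for all $i$.
\end{itemize}
Hence distinct weak patterns realized by points of $X$ give distinct strict sign vectors of $(Q_1,\ldots,Q_s)$ at points off the zero set. A strict sign vector is constant on each connected component of the complement of $\bigcup_i\{Q_i=0\}$, and two points with different sign vectors lie in different components. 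So the number of weak patterns is at most the number of such components.

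Next, apply Warren's theorem. If $s\ge m$, the number of components is at most $(4e\Delta s/m)^m\le(4e\Delta s)^m$. If $s<m$, the number of patterns is trivially at most $2^s\le 2^m\le(2\Delta s)^m$; in this regime there are only $s$ polynomials, so the trivial count already suffices. We may assume $\Delta\ge1$: if $\Delta=0$ every $P_i$ is constant, only one pattern occurs, and the bound holds. Taking $h=4e$ covers both cases.

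The main obstacle is the weak-to-strict conversion, which must be done uniformly over a finite witness set so that a single $\delta$ works for all patterns simultaneously. Once that is in place, the rest is direct citation of Warren's theorem.
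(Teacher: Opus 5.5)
Your argument is correct and is the standard derivation behind the paper's bare citation of Warren (the paper gives no proof of its own). The single shift $Q_i=P_i+\delta/2$ over a finite witness set turns the binary weak patterns into strict sign vectors of distinct components. Warren's $(4e\Delta N/m)^m$ bound together with the trivial $2^s$ count for $s<m$ then gives $h=4e$.

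One small slip: for $\Delta=0$ (or $s=0$) the right-hand side $(h\Delta s)^m$ equals $0$ while one pattern is realized, so the bound does \emph{not} hold there. The lemma implicitly assumes $\Delta,s\ge 1$, as in its application, and you should state that assumption rather than claim the degenerate case works.
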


\subsection{Ramsey bounds for hypergraph colorings with bounded VC-dimension}

The first consequence is a multicolor Ramsey bound for hypergraph colorings with bounded VC-dimension.

\begin{corollary}\label{cor:multi-hypergraph-ramsey}
For every fixed $k\ge2$ and $r\ge2$, there is a constant $h_{k,r}$ such that for all integers $d\ge1$ and $t\ge k$,
$$
        R_{k,r}^d(t)
        \le
        \mathrm{twr}_{k-1}\!\left(
        t^{h_{k,r}(hd)^{d(r-1)}}\right).
$$
Equivalently, for $k\ge3$ and all sufficiently large $n$,
$$
        f_{k,r}^d(n)
        \ge
        \left(\log^{(k-2)} n\right)^{
        1/(h_{k,r}(hd)^{d(r-1)})}.
$$
\end{corollary}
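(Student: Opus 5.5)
Two ingredients are combined. First, a multicolour graph base case: for $k=2$, every admissible $r$-colouring of $\binom{[N]}{2}$ has a monochromatic set of size $N^{1/(h(hd)^{d(r-1)})}$. Second, an Erd\H{o}s--Rado stepping-up argument for $k\ge 3$, in which each step costs one exponential.

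\emph{Base case $k=2$.} We induct on $r$. For $r=2$, admissibility gives $\VC(H_1)\le d$, so Theorem~\ref{thm:main} applied to the graph $H_1$ yields a clique or stable set of size $N^{(hd)^{-d}}$. This set is monochromatic in colour $1$ or colour $2$.

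For $r\ge3$, apply Theorem~\ref{thm:main} to $H_1$. If we get a clique, we are done. Otherwise we get a set $S$ of size $N^{(hd)^{-d}}$ containing no edge of colour $1$. On $S$ the colouring uses only colours $2,\dots,r$, and colours $2,\dots,r-1$ still have VC-dimension at most $d$, because VC-dimension is monotone under restriction. So the colouring on $S$ is admissible with $r-1$ colours. By induction we obtain a monochromatic set of size
\[
|S|^{(hd)^{-d(r-2)}/h'} = N^{(hd)^{-d(r-1)}/h'}.
\]
Iterating gives $R^d_{2,r}(t)\le t^{h_{2,r}(hd)^{d(r-1)}}$.

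\emph{Stepping up.} Given an admissible colouring of $\binom{[N]}{k}$, we build the Erd\H{o}s--Rado greedy sequence $v_1,v_2,\dots$ together with nested candidate sets. At step $i$, the remaining candidates are partitioned according to the vector of colours of $A\cup\{v\}$, where $A$ ranges over the $(k-1)$-subsets of $\{v_1,\dots,v_i\}$ containing $v_i$, and we keep the largest class.

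This is where bounded VC-dimension is used. For each colour $j<r$, the pattern of which such $A$ give colour $j$ for a candidate $v$ is the trace of the dual family. By Lemma~\ref{lem:sauer-shelah} applied to the dual set system, whose VC-dimension is less than $2^{d+1}$, the number of classes is only polynomial in $i$, not $r^{\binom{i}{k-2}}$. Hence the greedy procedure runs for about $\mathrm{twr}$-one-level-lower of $N$ steps.

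The result is a sequence $v_1,\dots,v_M$ whose colour on a $k$-set depends only on its first $k-1$ elements. This induces an $r$-colouring of $\binom{[M]}{k-1}$, and we check that it is again admissible. The neighbourhood families of the induced colouring are restrictions of those of the original, so its VC-dimension is at most $d$.

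By induction on $k$, $R_{k,r}^d(t)\le 2^{\mathrm{poly}(R^d_{k-1,r}(t))}$, with the polynomial degree depending only on $k,r$ and the Sauer--Shelah dual bound. Unwinding from the base case $k=2$ gives the tower of height $k-1$ with top exponent $h_{k,r}(hd)^{d(r-1)}$. Any $d$-dependence in the Sauer--Shelah exponent, of order $2^{d}$, appears only in lower levels of the tower. It can therefore be absorbed there, or sidestepped by using the trivial count $r^{\binom{i}{k-2}}$, which only changes the tower by constants at lower levels. Inverting the tower bound gives the $f_{k,r}^d$ statement.

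The main obstacle is checking that the induced $(k-1)$-uniform colouring is admissible. In particular, it is unclear how the inherited neighbourhood sets $N(A)$ for $A\in\binom{[M]}{k-2}$ relate to the original $N_H(A')$ for $(k-1)$-sets $A'$. I would handle this by defining the induced colour via the original hyperedge $A\cup\{v_{\max}\}$ together with a fixed later vertex, so that induced neighbourhoods are traces of original ones. If this fails, I would fall back on the trivial $r^{\binom{i}{k-2}}$ partition count, and apply Theorem~\ref{thm:main} only at the graph level, with graph neighbourhoods $N_H(A\cup\{\cdot\})$.
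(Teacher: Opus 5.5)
Your proposal is correct and is essentially the paper's argument: the $k=2$ case is handled by induction on $r$ via Theorem~\ref{thm:main}, followed by Erd\H{o}s--Rado stepping-up, and your ``fixed later vertex'' device does resolve the obstacle you flag. The induced colouring on $v_1,\dots,v_{M-1}$ is exactly the link of $v_M$, so its neighbourhoods $N_{H_j}(A\cup\{v_M\})\cap\{v_1,\dots,v_{M-1}\}$ are traces of original neighbourhoods; this is the paper's remark that VC-dimension does not increase on passing to a link. The dual Sauer--Shelah count is an optional refinement that only affects lower tower levels, since the paper's trivial count $R_{k,r}^d(t)\le r^{\binom{p}{k-1}}+k-2$ with $p=R_{k-1,r}^d(t-1)$ already yields the stated bound.
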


\begin{proof}
For $k=2$, set $a_d=(hd)^d$. We prove by induction on $r$ that
$
        R_{2,r}^d(t)\le t^{a_d^{r-1}}.
$
For $r=2$, this is Theorem~\ref{thm:main}. For $r>2$, apply Theorem~\ref{thm:main} to the graph formed by color $1$. A clique gives a monochromatic set in color $1$; a stable set eliminates color $1$, and the induction hypothesis applies to the remaining $r-1$ colors.

For $k\ge3$, the Erd\H{o}s--Rado greedy construction gives
$$
        R_{k,r}^d(t)
        \le
        r^{\binom{p}{k-1}}+k-2,
        \qquad
        p=R_{k-1,r}^d(t-1).
$$
When passing to a link, the VC-dimension of each of the first $r-1$ color classes does not increase. Iterating the recurrence from the graph-level bound gives the stated tower estimate.
\end{proof}

\subsection{Boolean combinations of relations of bounded VC-dimension}

We next give elementary ways to verify the hypothesis of bounded VC-dimension. Combined with Theorem~\ref{thm:main}, each verification gives an explicit Erd\H{o}s--Hajnal exponent.

\begin{lemma}\label{lem:boolean-vc}
Let $R_1,\ldots,R_t\subseteq V\times V$ be binary relations, and write
$
        R_i(v)=\{u\in V:(v,u)\in R_i\}.
$
Assume that the row family $\{R_i(v):v\in V\}$ has VC-dimension at most $d_i$. Let $G$ be a graph on $V$ whose edge relation is a Boolean combination of $R_1,\ldots,R_t$: for distinct $u,v\in V$,
$$
        uv\in E(G)
        \quad\Longleftrightarrow\quad
        \Phi(1_{R_1(v,u)},\ldots,1_{R_t(v,u)})=1
$$
for some Boolean function $\Phi:\{0,1\}^t\to\{0,1\}$. We assume that this rule defines a symmetric relation on distinct pairs. Let 
$
        D=1+\sum_{i=1}^t\max\{1,d_i\}.
$
Then
$
        \VC(G)\le hD\log(2D).
$
Consequently, we have 
$$
        \max\{\omega(G),\alpha(G)\}\ge
        |G|^{(hD\log(2D))^{-hD\log(2D)}}.
$$
\end{lemma}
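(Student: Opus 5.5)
Our plan is the usual Sauer--Shelah counting argument: bound the number of neighbourhood traces on a set $S$ polynomially in $|S|$, then compare with $2^{|S|}$.

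Let $S\subseteq V$ be shattered by $\{N_G(v):v\in V\}$, and set $m=|S|$. For each $v\in V$, consider its trace $N_G(v)\cap S$.

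First, note that for $u\in S\setminus\{v\}$, membership $u\in N_G(v)$ is decided by the vector $(1_{R_1(v,u)},\ldots,1_{R_t(v,u)})$ through $\Phi$, that is, by whether $u\in R_i(v)$ for each $i$. Hence the trace $N_G(v)\cap S$ is determined by two pieces of data:
\begin{itemize}
\item the tuple $(R_1(v)\cap S,\ldots,R_t(v)\cap S)$;
\item whether $v\in S$, and if so which element it is, since $v$ itself is never in $N_G(v)$.
\end{itemize}

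Second, we count the possibilities. By Lemma~\ref{lem:sauer-shelah}, the number of choices for $R_i(v)\cap S$ is at most $(m+1)^{\max\{1,d_i\}}$. The second piece of data contributes a factor of at most $m+1$. Hence the number of distinct traces is at most
\[
(m+1)^{1+\sum_i\max\{1,d_i\}}=(m+1)^D.
\]

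Third, we compare with shattering. Since $S$ is shattered, there are $2^m$ distinct traces, so
\[
2^m\le (m+1)^D,\qquad\text{that is,}\qquad m\le D\log(m+1).
\]
A routine calculation shows that this forces $m\le hD\log(2D)$. Indeed, if $m> hD\log(2D)$ with $h$ large, then $m/\log(m+1)$ is increasing in $m$, and its value at $m=hD\log(2D)$ already exceeds $D$, since $\log(hD\log(2D)+1)\le \log h+2\log(2D)$. This gives $\VC(G)\le hD\log(2D)$.

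Finally, we apply Theorem~\ref{thm:main} with $d=\lfloor hD\log(2D)\rfloor$. This yields the exponent $(h'd)^{-d}$. Since $D\ge 2$, we have $d\ge1$, and the form $(hD\log(2D))^{-hD\log(2D)}$ follows after enlarging $h$. The absorption uses $(h'hD\log(2D))^{hD\log(2D)}\le (h''D\log(2D))^{h''D\log(2D)}$, which holds because $h'^{\,x}\le x^{x}$-type growth is absorbed by raising the base and the exponent together.

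The only delicate point is the self-trace issue, namely that the value $\Phi$ assigns at $u=v$ does not matter. The extra factor $m+1$, which is the reason $D$ contains the summand $1$, handles it.
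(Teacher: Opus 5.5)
Your proposal is correct and follows essentially the same argument as the paper: apply Sauer--Shelah to each row family, pay a factor $m+1$ for the diagonal, deduce $2^m\le (m+1)^D$, and then invoke Theorem~\ref{thm:main}. You also spell out the elementary estimate $m\le hD\log(2D)$ and the absorption of constants into $h$, which the paper leaves implicit.
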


\begin{proof}
Let $S\subseteq V$ have size $s$. By Lemma~\ref{lem:sauer-shelah}, we have
$$
        |\{R_i(v)\cap S:v\in V\}|
        \le
        \sum_{j=0}^{d_i}\binom{s}{j}
        \le
        (s+1)^{\max\{1,d_i\}}.
$$
For fixed $v$, the trace $N_G(v)\cap S$ is determined by the traces $R_i(v)\cap S$, $i=1,\ldots,t$, apart from the diagonal convention that $v$ is not adjacent to itself when $v\in S$. This diagonal issue costs at most a factor of $s+1$. Hence the number of neighbourhood traces on $S$ is at most
$$
        (s+1)\prod_{i=1}^t (s+1)^{\max\{1,d_i\}}
        =
        (s+1)^D.
$$
If $S$ is shattered, then
$
        2^s\le (s+1)^D.
$
This implies
$
        s\le hD\log(2D).
$
Therefore $\VC(G)\le hD\log(2D)$, and the homogeneous-set bound follows from Theorem~\ref{thm:main}.
\end{proof}

\begin{corollary}\label{cor:nip-boolean}
Suppose $G$ is defined by a Boolean combination of formulas
$
        \varphi_1(x;y),\ldots,\varphi_t(x;y),
$
whose corresponding row families have VC-dimensions at most $d_1,\ldots,d_t$. With
$
        D=1+\sum_{i=1}^t\max\{1,d_i\},
$
one has
$$
        \max\{\omega(G),\alpha(G)\}\ge
        |G|^{(hD\log(2D))^{-hD\log(2D)}}.
$$
\end{corollary}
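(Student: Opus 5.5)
This corollary should follow by a direct reduction to Lemma~\ref{lem:boolean-vc}. The model-theoretic language only specifies the relations. I would interpret each formula $\varphi_i(x;y)$ on the vertex set $V=V(G)$ as the binary relation
\[
R_i=\{(v,u)\in V\times V:\ \varphi_i(v;u)\text{ holds}\},
\]
so that the row $R_i(v)=\{u\in V:\varphi_i(v;u)\}$ is exactly the instance of $\varphi_i$ with parameter $v$. The hypothesis says that the corresponding row families have VC-dimension at most $d_i$. Since this is a statement about the finite family $\{R_i(v):v\in V\}$, the hypothesis of Lemma~\ref{lem:boolean-vc} on the rows is satisfied verbatim. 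If the hypothesis is instead read as a bound on the VC-dimension of $\varphi_i$ in the ambient structure, I would note that restricting to the finite subset $V$ cannot increase VC-dimension. Traces on subsets of $V$ are traces of the ambient family, so the bound $d_i$ is inherited.

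Next, "$G$ is defined by a Boolean combination" means there is a Boolean function $\Phi:\{0,1\}^t\to\{0,1\}$ such that, for distinct $u,v$,
\[
uv\in E(G)\iff \Phi\bigl(1_{\varphi_1(v;u)},\ldots,1_{\varphi_t(v;u)}\bigr)=1.
\]
Since $G$ is a graph, this rule is automatically symmetric on distinct pairs, which is exactly the symmetry assumption of Lemma~\ref{lem:boolean-vc}. With $D=1+\sum_i\max\{1,d_i\}$, Lemma~\ref{lem:boolean-vc} gives $\VC(G)\le hD\log(2D)$. Its final conclusion, which is Theorem~\ref{thm:main} applied with $d=\lceil hD\log(2D)\rceil$, gives the stated homogeneous-set bound after absorbing the ceiling into the constant $h$.

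The only point that needs care, and the one I expect to be the obstacle, is matching conventions. First, the variable order in $\varphi_i(x;y)$ must agree with the row convention $R_i(v)=\{u:(v,u)\in R_i\}$. If the hypothesis concerns the dual family (parameters in $x$), I would either swap the roles of $u$ and $v$ or appeal to symmetry of the graph relation. Failing that, I would use the standard fact that the dual VC-dimension is below $2^{d_i+1}$, but that would worsen $D$, so I would insist on the row-family reading as stated. Second, if some $d_i=0$, the $\max\{1,d_i\}$ in $D$ already handles it. No further estimate is needed.
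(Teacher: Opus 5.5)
Your proposal is correct and is essentially the paper's proof, which is the one-line reduction: apply Lemma~\ref{lem:boolean-vc} to the relations $R_i(v)=\{u:\varphi_i(v;u)\}$. Your extra remarks are valid points the paper leaves implicit: the VC bound passes from an ambient structure to the finite vertex set, symmetry of the edge relation handles the variable-order convention, and the ceiling is absorbed into $h$.
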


\begin{proof}
Apply Lemma~\ref{lem:boolean-vc} to the relations
$
        R_i(v)=\{u:\varphi_i(v;u)\}.
$
\end{proof}

\begin{proposition}\label{prop:semialgebraic}
Let $m,t,\Delta\ge1$ be integers, and let $P\subseteq\mathbb R^m$ be
finite. Suppose that a graph $G$ on $P$ is defined by polynomials
$$
        f_1,\ldots,f_t\in
        \mathbb R[X_1,\ldots,X_m,Y_1,\ldots,Y_m]
$$
of degree at most $\Delta$, and a Boolean function $\Phi:\{0,1\}^t\to\{0,1\}$, so that for distinct $p,q\in P$,
$$
        pq\in E(G)
        \quad\Longleftrightarrow\quad
        \Phi(1_{f_1(p,q)\ge0},\ldots,1_{f_t(p,q)\ge0})=1.
$$
Then
$
        \VC(G)\le hm\log(2mt\Delta),
$
and hence
$
        \max\{\omega(G),\alpha(G)\}\ge
        |G|^{(hm\log(2mt\Delta))^{-hm\log(2mt\Delta)}}.
$
\end{proposition}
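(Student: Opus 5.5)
We bound the number of neighbourhood traces on a fixed finite set $S\subseteq P$ of size $s$, and then compare that count with $2^s$. Fix $S=\{q_1,\ldots,q_s\}$. For a vertex $p\in P$ and each $q_j\in S$ with $q_j\neq p$, whether $q_j\in N_G(p)$ is determined by the vector $(1_{f_i(p,q_j)\ge0})_{i\in[t]}$ through $\Phi$.

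The key step is to treat $p$ as the variable. For each $j\in[s]$ and $i\in[t]$, let $P_{ij}(x)=f_i(x,q_j)$, which is a polynomial in $m$ variables of degree at most $\Delta$. There are $st$ such polynomials. By Lemma~\ref{lem:sign-pattern}, as $x$ ranges over $\mathbb R^m$, the weak sign vector $(1_{P_{ij}(x)\ge0})_{i,j}$ takes at most $(h\Delta st)^m$ values. The trace $N_G(p)\cap S$ is a function of this sign vector, apart from the diagonal issue when $p\in S$: we must exclude $p$ from its own trace, and $p$ may not satisfy the rule on the pair $(p,p)$. As in the proof of Lemma~\ref{lem:boolean-vc}, this costs at most a factor of $s+1$, since one may record which element of $S$ (if any) equals $p$. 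Hence the number of traces on $S$ is at most $(s+1)(h\Delta st)^m$.

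If $S$ is shattered, then
\[
2^s\le (s+1)(h\Delta s t)^m .
\]
Taking logarithms gives $s\le \log(s+1)+m\log(h\Delta t)+m\log s$. The only mildly delicate point is solving this implicit inequality, and it is routine. Suppose $s\ge Cm\log(2mt\Delta)$ for a large absolute constant $C$. The right side is then at most $2m\log s+m\log(h\Delta t)$. We have $m\log s\le s/4$ once $s\ge C'm\log(2m)$, and $m\log(h\Delta t)\le s/4$ once $C$ is large. Together these give $s\le 3s/4$, a contradiction. Therefore $\VC(G)\le hm\log(2mt\Delta)$ after enlarging $h$.

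Finally, apply Theorem~\ref{thm:main} with $d=\lceil hm\log(2mt\Delta)\rceil$. This gives $\max\{\omega(G),\alpha(G)\}\ge |G|^{(h'd)^{-d}}$, and absorbing constants into $h$ (monotonicity in $d$ handles the ceiling) yields the stated exponent. No step is a real obstacle: Warren's bound does the work, and the care needed is only in the diagonal correction and the elementary inversion of $2^s\le\mathrm{poly}(s)$.
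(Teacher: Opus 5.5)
Your proposal is correct and matches the paper's proof essentially step for step: you treat $x\mapsto f_i(x,q_j)$ as $st$ polynomials in $m$ variables, apply the Warren-type bound, pay a factor $s+1$ for the diagonal, and invert $2^s\le (s+1)(h\Delta st)^m$ before invoking Theorem~\ref{thm:main}. One cosmetic slip in your inversion: $\log(s+1)\le m\log s$ fails when $m=1$, but bounding $\log(s+1)\le 2\log s$ for $s\ge 2$ and adjusting the constants fixes it.
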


\begin{proof}
Let $S=\{q_1,\ldots,q_s\}\subseteq P$. As $p$ varies, $N_G(p)\cap S$ is determined, up to the diagonal convention, by the weak signs of the $ts$ polynomials
$
        p\mapsto f_j(p,q_\ell),
$ for
$        j=1,\ldots,t$, and $\ell=1,\ldots,s,
$
in $m$ variables, each of degree at most $\Delta$. By Lemma~\ref{lem:sign-pattern}, the number of possible weak sign patterns is at most
$
        (h\Delta ts)^m.
$
Thus the number of neighbourhood traces on $S$ is at most
$
        (s+1)(h\Delta ts)^m.
$
If $S$ is shattered, then
$
        2^s\le (s+1)(h\Delta ts)^m.
$
This implies
$
        s\le hm\log(2mt\Delta).
$
The conclusion follows from Theorem~\ref{thm:main}.
\end{proof}

We next consider graphs whose adjacency matrices have rank at most $r$ over some field. In this case, a direct linear-algebraic argument gives the required VC-dimension bound.

\begin{proposition}\label{prop:low-rank}
Let $G$ be a graph whose adjacency matrix has rank at most $r\ge1$ over some field. Then
$
        \VC(G)\le r.
$
Consequently,
$
        \max\{\omega(G),\alpha(G)\}\ge |G|^{(hr)^{-r}}.
$
\end{proposition}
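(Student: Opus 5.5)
The plan is to bound $\VC(G)$ by linear algebra and then apply Theorem~\ref{thm:main} with $d=r$. Let $A$ be the adjacency matrix of $G$, with rows and columns indexed by $V(G)$ and zero diagonal, and suppose $\operatorname{rank}A\le r$ over a field $\mathbb F$. Let $S\subseteq V(G)$ be shattered, with $s:=|S|$. The goal is to show $s\le r$.

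The key observation is that for every vertex $v$, the row of $A$ indexed by $v$, restricted to the columns in $S$, is exactly the indicator vector of the trace $N_G(v)\cap S$. This holds even when $v\in S$: the diagonal entry $A_{vv}=0$ matches the convention that $v\notin N_G(v)$. So no diagonal correction is needed here, unlike in Lemma~\ref{lem:boolean-vc} and Proposition~\ref{prop:semialgebraic}.

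Since $S$ is shattered, for each $x\in S$ there is a vertex $v_x$ with $N_G(v_x)\cap S=\{x\}$. The restricted row of $v_x$ is then the standard basis vector $e_x\in\mathbb F^S$. These $s$ vectors are linearly independent over any field, because each has a single entry equal to $1$ (which is nonzero in every field) and distinct vectors have their $1$ in distinct positions. They are rows of the $V(G)\times S$ submatrix $A[V(G),S]$, whose rank is at most $\operatorname{rank}A\le r$. Hence $s\le r$, that is, $\VC(G)\le r$.

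Finally, since $r\ge1$, Theorem~\ref{thm:main} applied with $d=r$ gives $\max\{\omega(G),\alpha(G)\}\ge |G|^{(hr)^{-r}}$. There is no real obstacle in this argument. The only point needing care is the diagonal convention, and it is resolved by the zero diagonal of the adjacency matrix. Only the singleton traces are used, so the bound would hold even under the weaker hypothesis that just the singletons of $S$ are realized as traces.
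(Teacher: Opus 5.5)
Your proposal is correct and follows essentially the same route as the paper: the singleton traces give the standard basis vectors as rows of the adjacency matrix restricted to the columns of $S$, forcing $s\le\operatorname{rank}\le r$, and Theorem~\ref{thm:main} with $d=r$ finishes. Your explicit remark that the zero diagonal makes this work for $v\in S$ as well is a small point of care that the paper leaves implicit.
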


\begin{proof}
Let $S=\{v_1,\ldots,v_s\}$ be shattered by the open neighbourhoods of $G$. For each $i\in[s]$, there is a vertex $u_i$ such that
$
        N_G(u_i)\cap S=\{v_i\}.
$
Thus the rows of the adjacency matrix, restricted to the columns indexed by $S$, contain the standard basis vectors $e_1,\ldots,e_s$. The row-space projection onto these $s$ coordinates therefore has dimension at least $s$. Hence the rank of the adjacency matrix is at least $s$. It follows that $\VC(G)\le r$, and the homogeneous-set bound follows from Theorem~\ref{thm:main}.
\end{proof}

Combining the preceding rank bound with Lemma~\ref{lem:boolean-vc} gives the corresponding statement for Boolean combinations of graphs whose adjacency matrices have bounded rank.

\begin{corollary}\label{cor:boolean-low-rank}
Suppose the edge relation of $G$ is a Boolean combination of graphs $G_1,\ldots,G_t$, and the adjacency matrix of $G_i$ has rank at most $r_i$ over some field. Let
$
        R=1+\sum_{i=1}^t\max\{1,r_i\}.
$
Then
$$
        \max\{\omega(G),\alpha(G)\}\ge
        |G|^{(hR\log(2R))^{-hR\log(2R)}}.
$$
\end{corollary}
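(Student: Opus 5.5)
The plan is to reduce the statement to Lemma~\ref{lem:boolean-vc}, using Proposition~\ref{prop:low-rank} as the input bound on the VC-dimension of each constituent relation.

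For each $i\in[t]$, take $R_i\subseteq V\times V$ to be the adjacency relation of $G_i$, so that $R_i(v)=N_{G_i}(v)$. The row family $\{R_i(v):v\in V\}$ is then the neighbourhood family of $G_i$. Its VC-dimension is $\VC(G_i)$, and Proposition~\ref{prop:low-rank} gives $\VC(G_i)\le r_i$. We therefore set $d_i=r_i$. It is worth noting that the proof of Proposition~\ref{prop:low-rank} only uses that the rows restricted to the columns indexed by $S$ contain the standard basis vectors. This holds for any field, so the hypothesis ``rank at most $r_i$ over some field'' can vary with $i$ without causing trouble.

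By hypothesis, the edge relation of $G$ on distinct pairs is $\Phi(1_{R_1(v,u)},\ldots,1_{R_t(v,u)})$ for some Boolean $\Phi$. This relation is symmetric because $G$ is a graph. Thus all assumptions of Lemma~\ref{lem:boolean-vc} hold, with
\[
D=1+\sum_{i=1}^t\max\{1,d_i\}\le 1+\sum_{i=1}^t\max\{1,r_i\}=R.
\]
Lemma~\ref{lem:boolean-vc} then gives $\VC(G)\le hD\log(2D)\le hR\log(2R)$, using that $x\log(2x)$ is increasing.

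Finally, apply Theorem~\ref{thm:main} with $d=\lceil hR\log(2R)\rceil$, or directly take the conclusion of Lemma~\ref{lem:boolean-vc}. This gives the exponent $(hd)^{-d}$, and after enlarging $h$ to absorb the ceiling and the constant from the main theorem, it becomes $(hR\log(2R))^{-hR\log(2R)}$. Since the exponent decreases in $D$, the bound obtained with $D$ implies the bound with $R$.

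There is no genuine obstacle here. The only point needing care is the monotonicity and constant bookkeeping when replacing $D$ by $R$ and absorbing constants into $h$, and this is routine.
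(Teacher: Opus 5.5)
Your proposal is correct and follows the paper's argument: Proposition~\ref{prop:low-rank} gives $\VC(G_i)\le r_i$, and Lemma~\ref{lem:boolean-vc} is then applied with $d_i=r_i$, so that $D=R$. Your extra remarks are sound but not needed; in particular, the monotonicity step is unnecessary because $D=R$ exactly.
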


\begin{proof}
By Proposition~\ref{prop:low-rank}, $\VC(G_i)\le r_i$ for every $i$. Then the corollary follows from Lemma~\ref{lem:boolean-vc}.
\end{proof}

We next consider graphs with sign-rank at most $r$. The proof reduces the neighbourhood traces to traces of homogeneous halfspaces in $\mathbb R^r$, whose shatter function has polynomial growth.

\begin{proposition}\label{prop:sign-rank}
Suppose $G$ has sign-rank at most $r$, that is, there is a real matrix $M=(M_{uv})_{u,v\in V(G)}$ with $\operatorname{rank}_{\mathbb R}M\le r$ such that, for distinct $u,v$,
$
        uv\in E(G)$ if and only if $M_{uv}>0. $
Then
$$
        \VC(G)\le hr\log(r+2),
$$
and hence
$$
        \max\{\omega(G),\alpha(G)\}\ge
        |G|^{(hr\log(r+2))^{-hr\log(r+2)}}.
$$
\end{proposition}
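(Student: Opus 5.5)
Since $\operatorname{rank}_{\mathbb R}M\le r$, we factor $M=XY^{\top}$ with $X,Y$ real $|V(G)|\times r$ matrices. Writing $x_u$ and $y_v$ for the rows of $X$ and $Y$, this gives $M_{uv}=\langle x_u,y_v\rangle$. Then for distinct $u,v$ we have $uv\in E(G)$ if and only if $\langle x_u,y_v\rangle>0$. So the trace $N_G(u)\cap S$ of a vertex $u$ on a set $S=\{v_1,\ldots,v_s\}$ is, apart from the diagonal convention when $u\in S$, the set of $v_\ell$ with $\langle x_u,y_{v_\ell}\rangle>0$. This is the trace on the point set $\{y_{v_\ell}\}$ of the open homogeneous halfspace with normal $x_u$. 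Note that the vectors $y_v$ need not be distinct, which is harmless.

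The plan is then to bound the number of such traces and compare with $2^s$, exactly as in the proof of Proposition~\ref{prop:semialgebraic}. The strict sign patterns $(\mathbf 1_{\langle x,y_{v_\ell}\rangle>0})_{\ell\le s}$ with $x\in\mathbb R^r$ must be counted. Lemma~\ref{lem:sign-pattern} is stated for weak patterns $P\ge 0$, so I apply it to the $s$ linear polynomials $P_\ell(x)=-\langle x,y_{v_\ell}\rangle$, each of degree $1$ in $r$ variables. The pattern $\mathbf 1_{\langle x,y\rangle>0}=1-\mathbf 1_{-\langle x,y\rangle\ge 0}$ is then determined by a weak pattern, so there are at most $(hs)^r$ patterns. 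One could instead cite the classical bound of Cover (or Sauer--Shelah for halfspaces of VC-dimension $r$), giving at most $2\sum_{j<r}\binom{s-1}{j}$ patterns; either bound suffices.

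For the diagonal issue, a vertex $u\in S$ has trace equal to its halfspace trace with the single coordinate $u$ forced to $0$. This multiplies the count by at most $s+1$, as in Lemma~\ref{lem:boolean-vc}. Hence if $S$ is shattered, then $2^s\le (s+1)(hs)^r$. Taking logarithms gives $s\le \log(s+1)+r\log(hs)$, and a routine calculation yields $s\le hr\log(r+2)$ after enlarging $h$. This is the only step needing any care: one checks that $s=Cr\log(r+2)$ violates the inequality for a suitable absolute $C$, noting that the $r=1$ case is covered by the $\log(r+2)$ factor.

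Finally, Theorem~\ref{thm:main} with $d=\lceil hr\log(r+2)\rceil$ gives $\max\{\omega(G),\alpha(G)\}\ge |G|^{(hd)^{-d}}$. Absorbing constants (since $hd\le h'r\log(r+2)$) gives the stated exponent $(hr\log(r+2))^{-hr\log(r+2)}$. I expect no genuine obstacle. The only mildly delicate points are converting strict to weak signs and the elementary inequality solving $2^s\le (s+1)(hs)^r$.
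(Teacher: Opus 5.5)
Your proposal is correct and follows the paper's proof. The paper likewise factors $M$, reads neighbourhood traces as traces of open homogeneous halfspaces on $\{y_v\}$, pays a factor $s+1$ for the diagonal, and solves $2^s\le (s+1)\cdot(\text{trace count})$. The only difference is that the paper counts traces by $(s+1)^r$ (Sauer--Shelah, since open homogeneous halfspaces in $\mathbb R^r$ have VC-dimension $r$), whereas you use Lemma~\ref{lem:sign-pattern}; that works equally well.

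One caveat on your aside: Cover's count $2\sum_{j<r}\binom{s-1}{j}$ applies to strict dichotomies of points in general position, and it can fail here. For example, when $r=1$ the zero normal vector gives an extra empty trace. The correct alternative is the Sauer--Shelah bound $\sum_{j\le r}\binom{s}{j}$. Your main route does not depend on this.
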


\begin{proof}
Factor $M$ as
$
        M_{uv}=\langle a_u,b_v\rangle
$
with $a_u,b_v\in\mathbb R^r$. For fixed $u$, the neighbourhood of $u$ is, up to the diagonal convention, the trace of the homogeneous halfspace
$
        \{x\in\mathbb R^r:\langle a_u,x\rangle>0\}
$
on the finite set $\{b_v:v\in V(G)\}$. On any $s$-point set, homogeneous halfspaces in $\mathbb R^r$ realize at most $(s+1)^r$ traces. The diagonal convention costs another factor of $s+1$. Hence, if an $s$-set is shattered, then
$
        2^s\le (s+1)^{r+1}.
$
This implies
$
        s\le hr\log(r+2).
$
The result follows from Theorem~\ref{thm:main}.
\end{proof}

We next consider graphs defined by a dot-product threshold representation in $\mathbb R^m$. The proof shows that such a representation gives sign-rank at most $m+1$ after adding one extra coordinate.

\begin{corollary}\label{cor:dot-product}
Let $m\ge1$ be an integer, and let $G$ be a graph on $V$. Suppose there are vectors $x_v\in\mathbb R^m$ and a real threshold $\tau$ such that, for distinct $u,v\in V$,
$
        uv\in E(G)
$ if and only if $       
        \langle x_u,x_v\rangle\ge \tau.
$
Then
$$
        \max\{\omega(G),\alpha(G)\}\ge
        |G|^{(hm\log(m+2))^{-hm\log(m+2)}}.
$$
\end{corollary}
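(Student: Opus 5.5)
The plan is to reduce to Proposition~\ref{prop:sign-rank}: exhibit a real matrix of rank at most $m+1$ whose positive entries off the diagonal are exactly the edges of $G$. The natural choice is
\[
  M_{uv}=\langle x_u,x_v\rangle-\tau+\delta ,
\]
for a small $\delta>0$. The strict inequality $M_{uv}>0$ in Proposition~\ref{prop:sign-rank} must capture the weak inequality $\langle x_u,x_v\rangle\ge\tau$, and the shift by $\delta$ handles this.

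First, choose $\delta$. Since $V$ is finite, the set of values $\{\langle x_u,x_v\rangle: u\ne v\}$ is finite. Take $\delta>0$ smaller than $\tau-\langle x_u,x_v\rangle$ for every distinct pair with $\langle x_u,x_v\rangle<\tau$; if there are no such pairs, any $\delta>0$ works. Then for distinct $u,v$ we have $M_{uv}>0$ if and only if $\langle x_u,x_v\rangle\ge\tau$, which holds if and only if $uv\in E(G)$. Diagonal entries are irrelevant, since Proposition~\ref{prop:sign-rank} only constrains distinct pairs.

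Second, bound the rank. Set $a_u=(x_u,\,1)\in\mathbb R^{m+1}$ and $b_v=(x_v,\,\delta-\tau)\in\mathbb R^{m+1}$. Then $M_{uv}=\langle a_u,b_v\rangle$, so $M$ factors through $\mathbb R^{m+1}$ and $\operatorname{rank}_{\mathbb R}M\le m+1$.

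Third, apply Proposition~\ref{prop:sign-rank} with $r=m+1$. This gives $\VC(G)\le h(m+1)\log(m+3)$ together with the corresponding homogeneous-set bound. Since $m\ge 1$, we have $m+1\le 2m$ and $\log(m+3)\le 2\log(m+2)$, so after enlarging $h$ (as the section's convention permits) the exponent takes the stated form $(hm\log(m+2))^{-hm\log(m+2)}$.

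None of these steps is deep. The only points needing care are the passage from the weak inequality to the strict one, which uses finiteness of $V$, and the absorption of $m+1$ into $m$ inside the constant $h$.
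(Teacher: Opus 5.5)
Your proposal is correct and follows essentially the same route as the paper. Both arguments use finiteness of $V$ to shift the threshold by a small slack, so that $\langle x_u,x_v\rangle\ge\tau$ becomes a strict positivity condition. Both then write the shifted form as an inner product in $\mathbb R^{m+1}$ by appending one coordinate, and apply Proposition~\ref{prop:sign-rank} with $r=m+1$, absorbing $m+1$ into $m$ via $h$. The differences are cosmetic: the paper puts the constant $-\tau+\eta/2$ on the other factor and handles the complete graph as a separate case, while your convention that any $\delta>0$ works when there are no non-edges covers that case directly.
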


\begin{proof}
If $G$ is complete, then the conclusion is trivial. Otherwise, since $V$ is finite and
$
        uv\notin E(G)
$ if and only if $       
        \langle x_u,x_v\rangle<\tau
$
for distinct $u,v$, we may choose
$
        0<\eta<
        \min\{\tau-\langle x_u,x_v\rangle: u\ne v,\ uv\notin E(G)\}.
$
Then, for all distinct $u,v$,
$
        uv\in E(G)
$ if and only if $        
        \langle x_u,x_v\rangle-\tau+\eta/2>0.
$
Moreover,
$$
        \langle x_u,x_v\rangle-\tau+\eta/2
        =
        \langle (x_u,-\tau+\eta/2),(x_v,1)\rangle .
$$
Thus $G$ has sign-rank at most $m+1$. Proposition~\ref{prop:sign-rank} therefore gives
$$
        \max\{\omega(G),\alpha(G)\}\ge
        |G|^{(hm\log(m+2))^{-hm\log(m+2)}},
$$
after adjusting the absolute constant $h$.
\end{proof}

Finally, we consider graphs defined by Boolean combinations of finitely many dot-product threshold inequalities. The proof follows the same sign-pattern counting argument as above.

\begin{proposition}\label{prop:multiple-bilinear}
Let $t\ge1$ and $r_1,\ldots,r_t\ge1$ be integers.
Suppose the edge relation of $G$ is a Boolean combination of $t$ relations
$
        \langle a_u^{(j)},b_v^{(j)}\rangle\ge \tau_j,
j=1,\ldots,t,
$
where $a_u^{(j)},b_v^{(j)}\in\mathbb R^{r_j}$. Let
$
        R=1+\sum_{j=1}^t r_j.
$
Then
$
        \VC(G)\le hR\log(2tR),
$
and hence
$$
       \max\{\omega(G),\alpha(G)\}\ge
        |G|^{(hR\log(2tR))^{-hR\log(2tR)}}.
$$
\end{proposition}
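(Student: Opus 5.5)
The plan is to follow the sign-pattern counting used in Proposition~\ref{prop:semialgebraic} and Proposition~\ref{prop:sign-rank}, with the free variables being the vectors attached to a single vertex $u$.

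Fix an $s$-set $S=\{v_1,\ldots,v_s\}\subseteq V(G)$. As $u$ ranges over $V(G)$, the trace $N_G(u)\cap S$ is determined by two pieces of data. The first is the weak sign vector of the $ts$ affine functions
\[
\bigl(a^{(1)},\ldots,a^{(t)}\bigr)\mapsto \langle a^{(j)},b^{(j)}_{v_\ell}\rangle-\tau_j,\qquad j\in[t],\ \ell\in[s],
\]
evaluated at $\bigl(a^{(1)}_u,\ldots,a^{(t)}_u\bigr)$. The second is the diagonal convention when $u\in S$, which costs a factor of at most $s+1$.

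If the Boolean combination is not symmetric in the roles of $u$ and $v$, it may also involve the reversed relations $\langle a_v^{(j)},b_u^{(j)}\rangle\ge\tau_j$. In that case I would add these as further affine functions in the variables $b_u^{(j)}$. This at most doubles both the number of variables and the number of functions, which only changes the absolute constant $h$.

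These are polynomials of degree $1$ in $m=\sum_j r_j\le R$ real variables (or $2R$ in the doubled case), and there are $s'\le 2ts$ of them. Lemma~\ref{lem:sign-pattern} therefore bounds the number of realizable weak sign patterns by $(h\cdot 2ts)^{2R}$. Consequently the number of distinct neighbourhood traces on $S$ is at most $(s+1)(2hts)^{2R}$.

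If $S$ is shattered, then $2^s\le (s+1)(2hts)^{2R}$. Taking logarithms gives
\[
s\le \log(s+1)+2R\log(2hts).
\]
Now suppose $s\ge CR\log(2tR)$ for a large absolute constant $C$. Then $\log s=O(\log(2tR)+\log C+\log\log(2tR))$, so the right-hand side is $O(R\log(2tR)+R\log C)$, which is smaller than $s$ once $C$ is large. Hence $\VC(G)\le hR\log(2tR)$. Plugging this into Theorem~\ref{thm:main} with $d=hR\log(2tR)$ gives the homogeneous-set bound $|G|^{(hR\log(2tR))^{-hR\log(2tR)}}$, after absorbing constants into $h$.

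The only delicate point is this last inversion of $2^s\le (s+1)(2hts)^{2R}$, specifically tracking that the dependence on $t$ enters only through $\log(2tR)$. It is a standard calculation.
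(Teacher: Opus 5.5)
Your proposal is correct and is essentially the paper's proof. Both count weak sign patterns of the $ts$ affine functions in the variables $\bigl(a_u^{(1)},\ldots,a_u^{(t)}\bigr)$, pay a factor $s+1$ for the diagonal convention, and invert $2^s\le (s+1)(hts)^{R}$. The paper bounds the face count of the hyperplane arrangement directly instead of citing Lemma~\ref{lem:sign-pattern}, and your doubling for asymmetric combinations is unnecessary, since the edge rule is already evaluated with $u$ in the first slot, but it does no harm.
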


\begin{proof}
Fix $S=\{v_1,\ldots,v_s\}$. As $u$ varies, $N_G(u)\cap S$ is determined by the weak signs of the $ts$ affine-linear functions
$
        \langle a_u^{(j)},b_{v_\ell}^{(j)}\rangle-\tau_j,
        $ for $ j=1,\ldots,t,$ and $ \ell=1,\ldots,s,$
in the parameter vector
$
        (a_u^{(1)},\ldots,a_u^{(t)})\in
        \mathbb R^{\sum_j r_j}.
$
The arrangement of these $ts$ affine hyperplanes has at most $(hts)^R$ faces, and therefore realizes at most $(hts)^R$ weak sign patterns. The diagonal convention adds a factor of at most $s+1$. Thus the number of neighbourhood traces on $S$ is at most
$
        (s+1)(hts)^R.
$
If $S$ is shattered, then
$
        2^s\le (s+1)(hts)^R.
$
This implies
$
        s\le hR\log(2tR).
$
The conclusion follows from Theorem~\ref{thm:main}.
\end{proof}

\subsection{Other quantitative consequences}

We finish this section by recording some further consequences of the reductions of Nguyen, Scott and Seymour~\cite[Sections~4--6]{NguyenScottSeymour2025}. The hypergraph Ramsey estimate under bounded VC-dimension has already been stated in the stronger multicolor form in Corollary~\ref{cor:multi-hypergraph-ramsey}, so we omit its two-color special case here. For the remaining applications, the proofs are the same as in~\cite{NguyenScottSeymour2025}; the only change is that their bounded VC-dimension Erd\H{o}s--Hajnal exponent is replaced by Theorem~\ref{thm:main}, which gives the explicit exponent $(hd)^{-d}$.

\begin{corollary}\label{cor:known-applications}
There exists $h\ge1$ such that the following hold.

\begin{enumerate}
\item If $\VC(G)\le d$, then for every $\eps\in(0,1/2)$, the graph $G$ contains an $\eps$-restricted induced subgraph on at least
$
        \eps^{(hd)^d}|G|
$
vertices.

\item Let $T$ be an $n$-vertex tournament whose in-neighbourhood family has VC-dimension at most $d$.
For a tournament $T$, let $\operatorname{tr}(T)$ denote the maximum order of a transitive subtournament of $T$.
Then $T$ contains a transitive subtournament of size at least
$
        n^{(h d\log d)^{-h d\log d}} .
$
Consequently, if $Q$ is a fixed two-colorable tournament and $d_Q$ bounds the in-neighbourhood VC-dimension of all $Q$-free tournaments, then every $Q$-free tournament $T$ satisfies
$$
        \operatorname{tr}(T)\ge
        |T|^{(h d_Q\log d_Q)^{-h d_Q\log d_Q}} .
$$

\item More generally, if a hereditary graph class $\mathcal G$ has VC-dimension at most $d_0$, then every $G\in\mathcal G$ satisfies
$$
        \max\{\omega(G),\alpha(G)\}\ge |G|^{(hd_0)^{-d_0}}
$$
and contains an $\eps$-restricted induced subgraph on at least
$
        \eps^{(hd_0)^{d_0}}|G|
$
vertices. In particular, this applies to the NIP, stable, distal, semi-algebraic, algebraic, and bounded-crossing geometric classes discussed in \cite{NguyenScottSeymour2025}, whenever the corresponding VC-dimension bound is known.
\end{enumerate}
\end{corollary}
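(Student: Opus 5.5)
The plan is to prove the three items of Corollary~\ref{cor:known-applications} separately, each as a consequence of Theorem~\ref{thm:main} or of the machinery behind it.

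\textbf{Item 1 (polynomial R\"odl).} I will not pass through the homogeneous-set conclusion, but through the exponent $s_r$ obtained in the proof of Theorem~\ref{thm:Cr}. With $r=d$, $a=\max\{s_{d-1},d,2\}\le (hd)^{d}$ and $u=h_3da$, Lemma~\ref{lem:global-alternative} gives, for every $\eta\in(0,x)$, either an $\eta$-restricted induced subgraph on $\eta^{u}|G|$ vertices, or a complete or anticomplete blockade of length $k\in[2,\eta^{-1}]$ and width $|G|/k^{u}$. In the second outcome, I take $q=\lceil 2/\eps\rceil$ blocks, which is possible when $\eta\le \eps/4$. Their union induces a graph in which, in $\overline G$ (complete case) or in $G$ (anticomplete case), every vertex has degree less than the size of its own block. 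That size is at most about $|S|/q$ once the blocks are trimmed to a common size, so the union is $\eps$-restricted. It has size at least $q|G|/k^{u}\ge \eps^{u}|G|$. Taking $\eta=\min\{\eps/4,x/2\}$, both outcomes yield an $\eps$-restricted subgraph on $\eps^{O(u)}|G|=\eps^{(hd)^d}|G|$ vertices, after enlarging $h$. For $\eps\ge x$, this is trivial from the case $\eps=x/2$.

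\textbf{Item 2 (tournaments).} Let $T$ be a tournament whose in-neighbourhood family has VC-dimension at most $d$. I will turn it into a graph by fixing an ordering of the vertices and declaring $uv$, with $u<v$, an edge if $u\to v$. The adjacency of this graph is a Boolean combination of three relations: the in-neighbourhood relation, its transpose, and the order relation $<$, each of VC-dimension $O(d)$ or $1$. By Lemma~\ref{lem:boolean-vc}, the graph has VC-dimension $O(d\log d)$, with care needed for the transposed relation via the dual VC-dimension bound $<2^{d+1}$. Cliques and stable sets in this graph are transitive subtournaments. Applying Theorem~\ref{thm:main} then gives exponent $(hd\log d)^{-hd\log d}$.

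The dual VC-dimension bound is the main obstacle, since it would cost an exponential. To avoid it, I would instead choose the ordering as in~\cite{NguyenScottSeymour2025}, namely the ordering by in-neighbourhood traces, or apply the main theorem directly to the bigraph formed by the in-neighbourhood set system. That bigraph has external VC-dimension $d$, so Theorem~\ref{thm:Cr} applies after Lemma~\ref{lem:external-to-vc}, possibly adjusting $d$ to $O(d\log d)$. The $Q$-free consequence is then immediate by substituting $d_Q$.

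\textbf{Item 3.} This follows from Theorem~\ref{thm:main} and item 1 applied to each $G\in\mathcal G$ with $d=d_0$. The listed classes are covered by citing the known VC-dimension bounds in~\cite{NguyenScottSeymour2025}, together with Proposition~\ref{prop:semialgebraic} and Corollary~\ref{cor:nip-boolean}.
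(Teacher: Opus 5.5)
\textbf{There is a genuine gap in item 1.} Lemma~\ref{lem:global-alternative}(ii) gives a complete or anticomplete blockade of length $k\in[2,\eta^{-1}]$. Taking $\eta\le\eps/4$ bounds $k$ from above, not from below, so you cannot extract $q=\lceil 2/\eps\rceil$ blocks.

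In fact, in the proof of that lemma $k=\lceil y^{-1/2}\rceil$ for whichever $y\in[\eta,x]$ the minimal-parameter argument stops at. This can be the constant $\lceil x^{-1/2}\rceil$, independent of $\eps$. For bounded $k$ (say two blocks complete to each other) the union of the blocks is only $(1/k)$-restricted. Passing to a single block and recursing loses a factor $k^{u}$ at every level, which does not close to a bound of the form $\eps^{O(u)}|G|$.

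In the paper, the blockade outcome is used only in the cograph induction of Lemma~\ref{lem:linear-conversion}. That induction yields homogeneous sets of size $|G|^{c}$, not linear-size restricted sets. Item 1 is instead obtained by rerunning the Nguyen--Scott--Seymour polynomial R\"odl argument with $(hd)^{-d}$ as the input exponent. To make your route work, you would need an additional argument that handles short blockades, for example a cograph-style multi-level selection. That is the substantive step, and it is missing. The second half of item 3 inherits this gap.

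\textbf{Item 2 is essentially right, but the obstacle you flag does not exist.} No transposed relation is needed. Write $N^-(v)$ for the in-neighbourhood. For distinct $u,v$ in a tournament, $v\to u$ if and only if $u\notin N^-(v)$. So for \emph{any} ordering, your graph satisfies $uv\in E(G)\iff \bigl(u\in N^-(v)\Leftrightarrow u<v\bigr)$.

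This is a symmetric Boolean combination of two row families, of VC-dimension at most $d$ and $1$ respectively. Lemma~\ref{lem:boolean-vc} with $D=d+2$ then gives $\VC(G)=O(d\log d)$, and Theorem~\ref{thm:main} yields the stated exponent. This is the kind of ordering reduction the paper defers to \cite{NguyenScottSeymour2025}.

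Drop the alternative of applying Theorem~\ref{thm:Cr} to the in-neighbourhood bigraph. That theorem is about graphs, and you would still need a graph whose cliques and stable sets are transitive subtournaments.
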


Indeed, the first item is obtained by running the polynomial R\"odl argument of Nguyen, Scott and Seymour with the explicit Erd\H{o}s--Hajnal exponent $(hd)^{-d}$. The induced-free, tournament, and model-theoretic/geometric consequences then follow from the same reductions used in \cite[Sections~5--6]{NguyenScottSeymour2025}. No new ingredient is needed beyond tracking the displayed exponent.

\section{Remarks and a probabilistic obstruction}\label{sec:remarks}

We first indicate where the quantitative improvement comes from in our proof.

\begin{remark}\normalfont
The proof of Nguyen, Scott and Seymour works for every fixed forbidden bigraph
$H$ and inducts on $|H|$ \cite[Section 3]{NguyenScottSeymour2025}. If one
encodes VC-dimension at most $d$ by forbidding the universal bigraph
$U_{d+1}$, then
$$
        |U_{d+1}|=d+1+2^{d+1},
$$
so their induction works on an object of exponential size. This is one
source of the double-exponential lower bound for the exponent, mentioned
after \cite[Theorem 1.2]{NguyenScottSeymour2025}.

The argument above avoids this by working directly with the nested classes $\mathcal C_r$. Indeed, $U_s$ appears as a bi-induced sub-bigraph of $U_{s+1}$ for every $s\ge1$, and hence
$
        \mathcal C_0\subseteq \mathcal C_1\subseteq\cdots .
$
In the mixed-block step, Lemma~\ref{lem:dimension-drop} asserts that
the pattern graph belongs to $\mathcal C_{r-1}$, so the parameter drops from
$r$ to $r-1$, instead of from an arbitrary bigraph to a bigraph with one fewer vertex.
The second optimization is the fixed-power iteration in Lemma~\ref{lem:iterate-fixed-power}.
Let $c_d$
denote the Erd\H{o}s--Hajnal exponent obtained by Nguyen, Scott and Seymour
for the class of graphs of VC-dimension at most $d$; that is, every
$n$-vertex graph of VC-dimension at most $d$ contains a clique or stable set
of size at least $n^{c_d}$. In their iterative sparsification argument, the
local step improves the restrictedness parameter from $y$ to $y^a$, where
$a$ is the reciprocal exponent from the previous induction step. A careful
inspection of their proof gives a recurrence of the form $c_d\ge c_{d-1}^K$,
for a universal constant $K$, and hence $c_d\ge 2^{-2^{O(d)}}$. In our
argument, writing $s_r=1/c_r$ for the reciprocal exponent of $\mathcal C_r$,
only the size
estimates involve $s_{r-1}$ by the dimension-drop lemma, while the restrictedness parameter improves from $y$ to $y^4$
with a fixed power. 
For graphs in $\mathcal C_r$, Lemma~\ref{lem:global-alternative} gives two
structural alternatives: either a large restricted induced subgraph, or a
large complete or anticomplete blockade.
Consequently, we have
$u\le C_1rs_{r-1}$, and the linear conversion lemma gives
$s_r\le C_2u\le Crs_{r-1}$. Hence $s_r\le C^r r!\le (Cr)^r$, and taking $r=d$ yields
$
        c_d\ge (Cd)^{-d}=2^{-O(d\log d)}.
$
\end{remark}

Finally we present an obstruction showing that the Erd\H{o}s--Hajnal
exponent must depend on the VC-dimension.

\begin{remark}\label{rem:random}\normalfont
A random graph construction gives a stronger obstruction than merely saying that the exponent cannot be bounded below independently of $d$.  In fact, for $d\ge 5$, the best possible exponent is at most about $1/d$.

Set
$$
        \rho_d
        =
        \frac{\binom{d+1}{3}}
        {(d-2)(\binom{d+1}{3}-d-1)}
        =
        \frac{d(d-1)}{(d-2)(d-3)(d+2)}
        =
        \frac1d+O\left(\frac1{d^2}\right).
$$
We show that, for every $\epsilon>0$ and all sufficiently large $n$, there is an $n$-vertex graph $G$ with $\VC(G)\le d$ and $\max\{\omega(G),\alpha(G)\}\le n^{\rho_d+\epsilon}$.

Fix $0<\epsilon<1-\rho_d$, and let $G\sim G(n,p)$, where $p=n^{-\rho_d-\epsilon/2}$.  We first bound the VC-dimension. 
Let $D\subseteq V(G)$ with $|D|=d+1$, and let $\mathcal W$ be the family of all $(d-2)$-subsets of $D$.
Recall that the trace of a vertex
$v\in V(G)$ on $D$ is $N_G(v)\cap D$.
Thus $D$ is shattered if for every subset $W\subseteq D$ there is a vertex
$v\in V(G)$ such that $N(v)\cap D=W$. 
  If $D$ is shattered, then every member of $\mathcal W$ occurs as a trace $N(v)\cap D$.  At most $d+1$ of these traces can be witnessed by vertices of $D$ itself. Hence at least $\binom{d+1}{3}-d-1$ traces must be witnessed by vertices outside $D$. Different traces need different witnesses, and a union bound gives
$$
        \Pr(D\text{ is shattered})
        \le
        C_d n^{\binom{d+1}{3}-d-1}
        p^{(d-2)(\binom{d+1}{3}-d-1)},
$$
where $C_d$ depends only on $d$.  Taking a union bound over all choices of
$D$, we obtain
$$
        \Pr(\VC(G)>d)
        \le
        C_d n^{\binom{d+1}{3}}
        p^{(d-2)(\binom{d+1}{3}-d-1)}
        =
        o(1),
$$
by the definition of $\rho_d$.

It remains to control the largest clique and stable set.  By the standard first-moment estimate, if $C=C(d,\epsilon)$ is large enough and $t=Cp^{-1}\log n$, then
$$
        \Pr(\omega(G)\ge t\text{ or }\alpha(G)\ge t)=o(1).
$$
Since $t=Cn^{\rho_d+\epsilon/2}\log n\le n^{\rho_d+\epsilon}$ for all sufficiently large $n$, with positive probability $G$ satisfies both $\VC(G)\le d$ and $\max\{\omega(G),\alpha(G)\}\le n^{\rho_d+\epsilon}.$

Consequently, for the exponent $\eta_d$ defined above, we have $\eta_d\le \rho_d+\epsilon$ for every $\epsilon>0$.  Letting $\epsilon\to0$ gives
$$
        \eta_d
        \le
        \rho_d
        =
        \frac{d(d-1)}{(d-2)(d-3)(d+2)}
        =
        \frac1d+O\left(\frac1{d^2}\right).
$$
Thus no Erd\H{o}s--Hajnal exponent better than order $1/d$ can hold uniformly for graphs of VC-dimension at most $d$.
\end{remark}

\section*{Acknowledgements}

This work was supported by the National Key R\&D Program of China (No.~2022YFA1006400) and the National Natural Science Foundation of China
(No.~12571376).

\bibliographystyle{abbrvnat}
\bibliography{refs}

@article{NguyenScottSeymour2025,
  author  = {Nguyen, Tung and Scott, Alex and Seymour, Paul},
  title   = {Induced subgraph density. {VI}. Bounded {VC}-dimension},
  journal = {Advances in Mathematics},
  volume  = {482},
  pages   = {110601},
  year    = {2025},
  doi     = {10.1016/j.aim.2025.110601},
  eprint  = {2312.15572},
  archivePrefix = {arXiv},
  primaryClass = {math.CO}
}

@incollection{LovaszSzegedy2010,
  author    = {Lov{\'a}sz, L{\'a}szl{\'o} and Szegedy, Bal{\'a}zs},
  title     = {Regularity partitions and the topology of graphons},
  booktitle = {An Irregular Mind},
  series    = {Bolyai Society Mathematical Studies},
  volume    = {21},
  pages     = {415--446},
  publisher = {J{\'a}nos Bolyai Mathematical Society},
  address   = {Budapest},
  year      = {2010}
}

@article{FoxPachSuk2019,
  author  = {Fox, Jacob and Pach, J{\'a}nos and Suk, Andrew},
  title   = {{E}rd{\H{o}}s--{H}ajnal conjecture for graphs with bounded {VC}-dimension},
  journal = {Discrete \& Computational Geometry},
  volume  = {61},
  number  = {4},
  pages   = {809--829},
  year    = {2019},
  doi     = {10.1007/s00454-019-00074-1}
}

@incollection{ErdosHajnal1977,
  author    = {Erd{\H{o}}s, Paul and Hajnal, Andr{\'a}s},
  title     = {On spanned subgraphs of graphs},
  booktitle = {Contributions to Graph Theory and Its Applications},
  pages     = {80--96},
  publisher = {Tech. Hochschule Ilmenau},
  address   = {Ilmenau},
  year      = {1977}
}

@article{ErdosHajnal1989,
  author  = {Erd{\H{o}}s, Paul and Hajnal, Andr{\'a}s},
  title   = {Ramsey-type theorems},
  journal = {Discrete Applied Mathematics},
  volume  = {25},
  number  = {1--2},
  pages   = {37--52},
  year    = {1989},
  doi     = {10.1016/0166-218X(89)90057-2}
}

@article{VapnikChervonenkis1971,
  author  = {Vapnik, Vladimir N. and Chervonenkis, Alexey Y.},
  title   = {On the uniform convergence of relative frequencies of events to their probabilities},
  journal = {Theory of Probability and its Applications},
  volume  = {16},
  number  = {2},
  pages   = {264--280},
  year    = {1971},
  doi     = {10.1137/1116025}
}

@book{AlonSpencer2016,
  author    = {Alon, Noga and Spencer, Joel H.},
  title     = {The Probabilistic Method},
  edition   = {4},
  publisher = {Wiley},
  year      = {2016},
  doi       = {10.1002/9781119061953}
}

@article{Chudnovsky2014,
  author  = {Chudnovsky, Maria},
  title   = {The {E}rd{\H{o}}s--{H}ajnal conjecture---a survey},
  journal = {Journal of Graph Theory},
  volume  = {75},
  number  = {2},
  pages   = {178--190},
  year    = {2014},
  doi     = {10.1002/jgt.21730}
}

@article{FoxSudakov2008,
  author  = {Fox, Jacob and Sudakov, Benny},
  title   = {Induced {R}amsey-type theorems},
  journal = {Advances in Mathematics},
  volume  = {219},
  number  = {6},
  pages   = {1771--1800},
  year    = {2008},
  doi     = {10.1016/j.aim.2008.07.009}
}

@article{BucicFoxPham2024,
  author        = {Buci{\'c}, Matija and Fox, Jacob and Pham, Huy Tuan},
  title         = {Equivalence between {E}rd{\H{o}}s--{H}ajnal and polynomial {R}{\"o}dl and {N}ikiforov conjectures},
  journal       = {arXiv preprint},
  eprint        = {2403.08303},
  archivePrefix = {arXiv},
  primaryClass  = {math.CO},
  year          = {2024}
}

@article{BucicNguyenScottSeymour2024,
  author  = {Buci{\'c}, Matija and Nguyen, Tung and Scott, Alex and Seymour, Paul},
  title   = {Induced subgraph density. {I}. A loglog step towards {E}rd{\H{o}}s--{H}ajnal},
  journal = {International Mathematics Research Notices},
  volume  = {2024},
  number  = {12},
  pages   = {9991--10004},
  year    = {2024},
  doi     = {10.1093/imrn/rnae096}
}

@article{NguyenScottSeymour2023IV,
  author  = {Nguyen, Tung and Scott, Alex and Seymour, Paul},
  title   = {Induced subgraph density. {IV}. {N}ew graphs with the {E}rd{\H{o}}s--{H}ajnal property},
  journal = {Transactions of the American Mathematical Society},
  year    = {2026},
  note    = {Accepted for publication},
  eprint  = {2307.06455},
  archivePrefix = {arXiv},
  primaryClass = {math.CO}
}

@article{NguyenScottSeymour2026VII,
  author  = {Nguyen, Tung and Scott, Alex and Seymour, Paul},
  title   = {Induced subgraph density. {VII}. {T}he five-vertex path},
  journal = {Proceedings of the London Mathematical Society},
  year    = {2026},
  doi     = {10.1112/plms.70133},
  eprint  = {2312.15333},
  archivePrefix = {arXiv},
  primaryClass = {math.CO}
}

@book{Simon2015,
  author    = {Simon, Pierre},
  title     = {A Guide to {NIP} Theories},
  series    = {Lecture Notes in Logic},
  volume    = {44},
  publisher = {Association for Symbolic Logic and Cambridge Scientific Publishers},
  year      = {2015}
}

@article{ChernikovStarchenkoThomas2021,
  author  = {Chernikov, Artem and Starchenko, Sergei and Thomas, Maryanthe Malliaris},
  title   = {Ramsey growth in some {NIP} structures},
  journal = {Journal of the Institute of Mathematics of Jussieu},
  volume  = {20},
  number  = {1},
  pages   = {1--29},
  year    = {2021},
  doi     = {10.1017/S1474748018000335}
}

@article{ChernikovStarchenko2018Distal,
  author  = {Chernikov, Artem and Starchenko, Sergei},
  title   = {Regularity lemma for distal structures},
  journal = {Journal of the European Mathematical Society},
  volume  = {20},
  number  = {10},
  pages   = {2437--2466},
  year    = {2018},
  doi     = {10.4171/JEMS/816}
}

@article{AlonPachPinchasiRadoicicSharir2005,
  author  = {Alon, Noga and Pach, J{\'a}nos and Pinchasi, Rom and Radoi{\v c}i{\'c}, Radoi and Sharir, Micha},
  title   = {Crossing patterns of semi-algebraic sets},
  journal = {Journal of Combinatorial Theory, Series A},
  volume  = {111},
  number  = {2},
  pages   = {310--326},
  year    = {2005},
  doi     = {10.1016/j.jcta.2004.12.008}
}

@article{Warren1968,
  author  = {Warren, Hugh E.},
  title   = {Lower bounds for approximation by nonlinear manifolds},
  journal = {Transactions of the American Mathematical Society},
  volume  = {133},
  number  = {1},
  pages   = {167--178},
  year    = {1968},
  doi     = {10.2307/1994977}
}

@article{FoxPachToth2011,
  author  = {Fox, Jacob and Pach, J{\'a}nos and T{\'o}th, Csaba D.},
  title   = {Intersection patterns of curves},
  journal = {Journal of the London Mathematical Society},
  volume  = {83},
  number  = {2},
  pages   = {389--406},
  year    = {2011},
  doi     = {10.1112/jlms/jdq082}
}

@article{Tomon2024,
  author  = {Tomon, Istv{\'a}n},
  title   = {String graphs have the {E}rd{\H{o}}s--{H}ajnal property},
  journal = {Journal of the European Mathematical Society},
  volume  = {26},
  number  = {1},
  pages   = {275--287},
  year    = {2024},
  doi     = {10.4171/JEMS/1376}
}

@article{Sauer1972,
  author  = {Sauer, Norbert},
  title   = {On the density of families of sets},
  journal = {Journal of Combinatorial Theory, Series A},
  volume  = {13},
  number  = {1},
  pages   = {145--147},
  year    = {1972},
  doi     = {10.1016/0097-3165(72)90019-2}
}

@article{Shelah1972,
  author  = {Shelah, Saharon},
  title   = {A combinatorial problem; stability and order for models and theories in infinitary languages},
  journal = {Pacific Journal of Mathematics},
  volume  = {41},
  number  = {1},
  pages   = {247--261},
  year    = {1972}
}

@article{GishbolinerShapira2023,
  author  = {Gishboliner, Lior and Shapira, Asaf},
  title   = {On {R}{\"o}dl's theorem for cographs},
  journal = {Electronic Journal of Combinatorics},
  volume  = {30},
  number  = {4},
  pages   = {Paper No. 4.13},
  year    = {2023},
  doi     = {10.37236/11603}
}

@article{ErdosRado1952,
  author  = {Erd{\H{o}}s, Paul and Rado, Richard},
  title   = {Combinatorial theorems on classifications of subsets of a given set},
  journal = {Proceedings of the London Mathematical Society},
  series  = {3},
  volume  = {2},
  pages   = {417--439},
  year    = {1952},
  doi     = {10.1112/plms/s3-2.1.417}
}

@article{ConlonFoxPachSudakovSuk2014,
  author  = {Conlon, David and Fox, Jacob and Pach, J{\'a}nos and Sudakov, Benny and Suk, Andrew},
  title   = {Ramsey-type results for semi-algebraic relations},
  journal = {Transactions of the American Mathematical Society},
  volume  = {366},
  number  = {9},
  pages   = {5043--5065},
  year    = {2014},
  doi     = {10.1090/S0002-9947-2014-06044-6}
}

@article{AlonPachSolymosi2001,
  author  = {Alon, Noga and Pach, J{\'a}nos and Solymosi, J{\'o}zsef},
  title   = {Ramsey-type theorems with forbidden subgraphs},
  journal = {Combinatorica},
  volume  = {21},
  number  = {2},
  pages   = {155--170},
  year    = {2001},
  doi     = {10.1007/s004930100016}
}

@article{AlonFischerNewman2007,
  author  = {Alon, Noga and Fischer, Eldar and Newman, Ilan},
  title   = {Efficient testing of bipartite graphs for forbidden induced subgraphs},
  journal = {SIAM Journal on Computing},
  volume  = {37},
  number  = {3},
  pages   = {959--976},
  year    = {2007},
  doi     = {10.1137/050627915}
}

@article{AlonBaloghBollobasMorris2011,
  author  = {Alon, Noga and Balogh, J{\'o}zsef and Bollob{\'a}s, B{\'e}la and Morris, Robert},
  title   = {The structure of almost all graphs in a hereditary property},
  journal = {Journal of Combinatorial Theory, Series B},
  volume  = {101},
  number  = {2},
  pages   = {85--110},
  year    = {2011},
  doi     = {10.1016/j.jctb.2010.10.001},
  eprint  = {0905.1942},
  archivePrefix = {arXiv},
  primaryClass = {math.CO}
}

@article{FoxPachSuk2016,
  author  = {Fox, Jacob and Pach, J{\'a}nos and Suk, Andrew},
  title   = {A polynomial regularity lemma for semi-algebraic hypergraphs and its applications in geometry and property testing},
  journal = {SIAM Journal on Computing},
  volume  = {45},
  number  = {6},
  pages   = {2199--2223},
  year    = {2016},
  doi     = {10.1137/15M1007355},
  eprint  = {1502.01730},
  archivePrefix = {arXiv},
  primaryClass = {math.CO}
}

@article{FoxPachShefferSukZahl2017,
  author  = {Fox, Jacob and Pach, J{\'a}nos and Sheffer, Adam and Suk, Andrew and Zahl, Joshua},
  title   = {A semi-algebraic version of {Z}arankiewicz's problem},
  journal = {Journal of the European Mathematical Society},
  volume  = {19},
  number  = {6},
  pages   = {1785--1810},
  year    = {2017},
  doi     = {10.4171/JEMS/705},
  eprint  = {1407.5705},
  archivePrefix = {arXiv},
  primaryClass = {math.CO}
}

@article{ChernikovGalvinStarchenko2020,
  author  = {Chernikov, Artem and Galvin, David and Starchenko, Sergei},
  title   = {Cutting lemma and {Z}arankiewicz's problem in distal structures},
  journal = {Selecta Mathematica},
  volume  = {26},
  pages   = {25},
  year    = {2020},
  doi     = {10.1007/s00029-020-0551-2},
  eprint  = {1612.00908},
  archivePrefix = {arXiv},
  primaryClass = {math.LO}
}

@article{Do2018,
  author  = {Do, Thao T.},
  title   = {{Z}arankiewicz's problem for semi-algebraic hypergraphs},
  journal = {Journal of Combinatorial Theory, Series A},
  volume  = {158},
  pages   = {621--642},
  year    = {2018},
  doi     = {10.1016/j.jcta.2018.04.007},
  eprint  = {1705.01979},
  archivePrefix = {arXiv},
  primaryClass = {math.CO}
}

@article{JanzerPohoata2024,
  author  = {Janzer, Oliver and Pohoata, Cosmin},
  title   = {On the {Z}arankiewicz problem for graphs with bounded {VC}-dimension},
  journal = {Combinatorica},
  volume  = {44},
  number  = {4},
  pages   = {839--848},
  year    = {2024},
  doi     = {10.1007/s00493-024-00095-2},
  eprint  = {2009.00130},
  archivePrefix = {arXiv},
  primaryClass = {math.CO}
}

@article{FranklPach1984,
  author  = {Frankl, Peter and Pach, J{\'a}nos},
  title   = {On disjointly representable sets},
  journal = {Combinatorica},
  volume  = {4},
  number  = {1},
  pages   = {39--45},
  year    = {1984},
  doi     = {10.1007/BF02579155}
}

@article{GeXuYipZhangZhao2026,
  author  = {Ge, Gennian and Xu, Zixiang and Yip, Chi Hoi and Zhang, Shengtong and Zhao, Xiaochen},
  title   = {The {Frankl}--{Pach} upper bound is not tight for any uniformity},
  journal = {Journal of Combinatorial Theory, Series A},
  volume  = {217},
  pages   = {106078},
  year    = {2026},
  doi     = {10.1016/j.jcta.2025.106078},
  eprint  = {2412.11901},
  archivePrefix = {arXiv},
  primaryClass = {math.CO}
}

@article{ChaoXuYipZhang2025,
  author  = {Chao, Ting-Wei and Xu, Zixiang and Yip, Chi Hoi and Zhang, Shengtong},
  title   = {Uniform set systems with small {VC}-dimension},
  journal = {International Mathematics Research Notices},
  volume  = {2025},
  number  = {17},
  pages   = {rnaf269},
  year    = {2025},
  doi     = {10.1093/imrn/rnaf269},
  eprint  = {2501.13850},
  archivePrefix = {arXiv},
  primaryClass = {math.CO}
}

@article{Laskowski1992,
  author  = {Laskowski, Michael C.},
  title   = {Vapnik-Chervonenkis classes of definable sets},
  journal = {Journal of the London Mathematical Society},
  volume  = {45},
  number  = {2},
  pages   = {377--384},
  year    = {1992},
  doi     = {10.1112/jlms/s2-45.2.377}
}

@article{ChudnovskySafra2008,
  author  = {Chudnovsky, Maria and Safra, Shmuel},
  title   = {The {E}rd{\H{o}}s--{H}ajnal conjecture for bull-free graphs},
  journal = {Journal of Combinatorial Theory, Series B},
  volume  = {98},
  number  = {6},
  pages   = {1301--1310},
  year    = {2008},
  doi     = {10.1016/j.jctb.2008.02.005}
}

@article{ChudnovskyFoxScottSeymourSpirkl2019,
  author  = {Chudnovsky, Maria and Fox, Jacob and Scott, Alex and Seymour, Paul and Spirkl, Sophie},
  title   = {Towards {E}rd{\H{o}}s--{H}ajnal for graphs with no 5-hole},
  journal = {Combinatorica},
  volume  = {39},
  number  = {5},
  pages   = {983--991},
  year    = {2019},
  doi     = {10.1007/s00493-019-3957-8},
  eprint  = {1803.03588},
  archivePrefix = {arXiv},
  primaryClass = {math.CO}
}

@article{ChudnovskyScottSeymourSpirkl2023,
  author  = {Chudnovsky, Maria and Scott, Alex and Seymour, Paul and Spirkl, Sophie},
  title   = {{E}rd{\H{o}}s--{H}ajnal for graphs with no 5-hole},
  journal = {Proceedings of the London Mathematical Society},
  volume  = {126},
  number  = {3},
  pages   = {997--1014},
  year    = {2023},
  doi     = {10.1112/plms.12504},
  eprint  = {2102.04994},
  archivePrefix = {arXiv},
  primaryClass = {math.CO}
}

@article{FoxNguyenScottSeymour2025,
  author  = {Fox, Jacob and Nguyen, Tung and Scott, Alex and Seymour, Paul},
  title   = {Induced subgraph density. {II}. Sparse and dense sets in cographs},
  journal = {European Journal of Combinatorics},
  volume  = {124},
  pages   = {104075},
  year    = {2025},
  doi     = {10.1016/j.ejc.2024.104075}
}

@article{NguyenScottSeymour2023V,
  author  = {Nguyen, Tung and Scott, Alex and Seymour, Paul},
  title   = {Induced subgraph density. {V}. {A}ll paths approach {E}rd{\H{o}}s--{H}ajnal},
  journal = {Advances in Combinatorics},
  year    = {2026},
  note    = {Accepted for publication},
  eprint  = {2307.15032},
  archivePrefix = {arXiv},
  primaryClass = {math.CO}
}

@article{PachSolymosi2001,
  author  = {Pach, J{\'a}nos and Solymosi, J{\'o}zsef},
  title   = {Crossing patterns of segments},
  journal = {Journal of Combinatorial Theory, Series A},
  volume  = {96},
  number  = {2},
  pages   = {316--325},
  year    = {2001},
  doi     = {10.1006/jcta.2001.3184}
}

@article{ChernikovStarchenko2021NIP,
  author  = {Chernikov, Artem and Starchenko, Sergei},
  title   = {Definable regularity lemmas for {NIP} hypergraphs},
  journal = {The Quarterly Journal of Mathematics},
  volume  = {72},
  number  = {4},
  pages   = {1401--1433},
  year    = {2021},
  doi     = {10.1093/qmath/haab011},
  eprint  = {1607.07701},
  archivePrefix = {arXiv},
  primaryClass = {math.LO}
}

@article{Forster2002,
  author  = {Forster, J{\"u}rgen},
  title   = {A linear lower bound on the unbounded error probabilistic communication complexity},
  journal = {Journal of Computer and System Sciences},
  volume  = {65},
  number  = {4},
  pages   = {612--625},
  year    = {2002},
  doi     = {10.1016/S0022-0000(02)00019-3}
}

@article{AlonMoranYehudayoff2017,
  author  = {Alon, Noga and Moran, Shay and Yehudayoff, Amir},
  title   = {Sign rank and the {V}apnik--{C}hervonenkis dimension},
  journal = {Sbornik: Mathematics},
  volume  = {208},
  number  = {12},
  pages   = {1724--1757},
  year    = {2017},
  doi     = {10.4213/sm8780},
  eprint  = {1503.07648},
  archivePrefix = {arXiv},
  primaryClass = {cs.LG}
}

@article{HuangJuZhou2026,
  author        = {Huang, Shenwei and Ju, Yiao and Zhou, Yidong},
  title         = {Erd{\H{o}}s--Hajnal beyond the five-vertex path},
  journal       = {arXiv preprint arXiv:2606.06258},
  year          = {2026},
  eprint        = {2606.06258},
  archivePrefix = {arXiv},
  primaryClass  = {math.CO},
  doi           = {10.48550/arXiv.2606.06258}
}

@article{YangYu2025,
  author        = {Yang, Tianchi and Yu, Xingxing},
  title         = {Maxmum Size of a Uniform Family with Bounded {VC}-dimension},
  journal       = {arXiv preprint arXiv:2508.14334},
  year          = {2025},
  eprint        = {2508.14334},
  archivePrefix = {arXiv},
  primaryClass  = {math.CO}
}

\end{document}